\documentclass[10pt]{amsart}
\usepackage{graphicx}
\usepackage{mathrsfs}
\usepackage{amsfonts}
\usepackage{amssymb}
\usepackage{amsmath,amsthm}
\usepackage{hyperref}
\usepackage{color}
\def\opn#1#2{\def#1{\operatorname{#2}}} 
\opn\chara{char} \opn\length{\ell}
\allowdisplaybreaks
 \opn\projdim{proj\,dim} \opn\injdim{inj\,dim}
\opn\rank{rank} \opn\depth{depth} \opn\grade{grade}
\opn\height{height} \opn\embdim{emb\,dim} \opn\codim{codim}

\opn\Tr{Tr} \opn\bigrank{big\,rank}
\opn\superheight{superheight}\opn\lcm{lcm}
\opn\trdeg{tr\,deg}%
\opn\reg{reg} \opn\lreg{lreg}

%
\opn\Ker{Ker} \opn\Coker{Coker} \opn\Im{Im} \opn\Hom{Hom}
\opn\Tor{Tor} \opn\Ext{Ext} \opn\End{End} \opn\Aut{Aut} \opn\id{id}

\opn\nat{nat}
\opn\pff{pf}
\opn\Pf{Pf} \opn\GL{GL} \opn\SL{SL} \opn\mod{mod} \opn\ord{ord}


\def\Implies{\ifmmode\Longrightarrow \else
     \unskip${}\Longrightarrow{}$\ignorespaces\fi}
\def\implies{\ifmmode\Rightarrow \else
     \unskip${}\Rightarrow{}$\ignorespaces\fi}
\def\iff{\ifmmode\Longleftrightarrow \else
     \unskip${}\Longleftrightarrow{}$\ignorespaces\fi}

\let\:=\colon

\newtheorem{Theorem}{Theorem}[section]
\newtheorem{Lemma}[Theorem]{Lemma}

\newtheorem{Remark}[Theorem]{Remark}

\newtheorem{Example}[Theorem]{Example}

\theoremstyle{definition}

%
\let\epsilon=\varepsilon
\let\kappa=\varkappa
\textwidth=15cm \textheight=22cm \topmargin=0.2cm
\oddsidemargin=0.5cm \evensidemargin=0.5cm \pagestyle{plain}
\opn\ini{in} \opn\inm{inm} \opn\Sym{Sym} \opn\diag{diag}
\opn\Ii{(i)} \opn\Iii{(ii)}

\title{A finite element method for Dirichlet boundary control problems governed by parabolic PDEs}
\author{Wei Gong $^\dag$}
\author{Michael Hinze $^*$}
\author{Zhaojie Zhou $^{\diamond}$}
\thanks{$^\dag$ LSEC, Institute of Computational Mathematics, Academy of Mathematics and Systems Science, Chinese Academy of Sciences, Beijing 100190, China.  Email: {\tt wgong@lsec.cc.ac.cn}}
\thanks{$^*$ Schwerpunkt Optimierung und Approximation, Universit\"{a}t Hamburg, Bundesstrasse 55, 20146, Hamburg, Germany. Email: {\tt michael.hinze@uni-hamburg.de}}
\thanks{$^{\diamond}$  School of Mathematics Sciences, Shandong Normal University, 250014, Ji'nan, China.
 Email: {\tt zzj534@amss.ac.cn} }

\date{\today}

\begin{document}
\maketitle

{\bf Abstract:}\hspace*{10pt} {
Finite element approximations of Dirichlet boundary control problems governed by parabolic PDEs on convex polygonal domains are studied in this paper. The existence of a unique solution to optimal control problems is guaranteed based on very weak solution of the state equation and $L^2(0,T;L^2(\Gamma))$ as control space. For the numerical discretization of the state equation we use standard piecewise linear and continuous finite elements for the space discretization of the state, while a dG(0) scheme is used for time discretization. The Dirichlet boundary control is realized through a space-time $L^2$-projection. We consider both piecewise linear, continuous finite element approximation and variational discretization for the controls and derive a priori $L^2$-error bounds for controls and states. We finally present numerical examples to support our theoretical findings.
}

{{\bf Keywords:}\hspace*{10pt}optimal control problem, parabolic equation, finite element method, a priori error estimate, Dirichlet boundary control}

{\bf Subject Classification}: 49J20, 49K20, 65N15, 65N30.
\section{Introduction}
\setcounter{equation}{0}
In this paper we study the  following parabolic optimal control problem:
\begin{eqnarray}
\min\limits_{u\in U_{ad}}\ \ J(y,u)&={1\over 2}\|y-y_d\|^2_{L^2(0,T;L^2(\Omega))}+{\alpha\over 2}\|u\|^2_{L^2(0,T;L^2(\Gamma))}\label{OPT}
\end{eqnarray}
subject to
\begin{equation}\label{OPT_state}
\left\{ \begin{aligned}\frac{\partial y}{\partial t} -\Delta y=f \ \ &\mbox{in}\
\Omega_T, \\
 \ y=u  \ \ \ &\mbox{on}\ \Sigma,\\
 y(0)=y_0\ \ \ &\mbox{in}\ \Omega,
\end{aligned} \right.
\end{equation}
where $\Omega_T=\Omega\times (0,T]$, $\Sigma=\partial\Omega\times (0,T]$ with $\Omega$ denoting an open bounded, convex polygonal domain in $\mathbb{R}^2$ with boundary $\Gamma:=\partial\Omega$, and $\alpha>0$, $f\in L^2(0,T;L^2(\Omega))$, $y_0\in L^2(\Omega)$, $y_d\in L^2(0,T;L^2(\Omega))$ and $T>0$ are fixed. $U_{ad}$ is the admissible control set which is assumed to be of box type
\begin{eqnarray}
U_{ad}:=\big\{u\in L^2(0,T;L^2(\Gamma)):\ u_a\leq u(x,t)\leq u_b,\ \mbox{a.e.}\ \mbox{on}\ \Sigma \big\}\label{control_set},
\end{eqnarray}
with $u_a< u_b$ denoting constants.

Dirichlet boundary control is important in many practical applications such as the active boundary control of flows. If one is, e.g., interested in blowing and suction as control on part of the boundary, controls with low regularity should be admissible, which could have jumps and satisfy pointwise bounds. In the mathematical theory one has to use the concept of very weak solutions in this situation, see \cite{Berggren.M2004} for a more detailed discussion of this fact.

In the present work we consider a parabolic Dirichlet boundary control problem of tracking type, which may be regarded as prototype problem to study Dirichlet boundary control for time-dependent PDEs. For parabolic optimal boundary control problems of Dirichlet type, only few contributions can be found in the literature. Kunisch and Vexler  in \cite{Kunisch.K;Vexler.B2007} considered a semi-smooth Newton method for the numerical solution of parabolic Dirichlet boundary control problems. A Robin penalization method using Robin-type boundary conditions applied to parabolic Dirichlet boundary control problems is investigated in \cite{Belgacem.FB;Bernardi.C;Fekih.HE2011}. However, to the best of the authors' knowledge no error analysis is available for the finite element approximation of this kind of problems. With the present paper we intend to fill this gap and derive a priori error estimates for parabolic Dirichlet boundary control problems. Compared to the elliptic case, parabolic Dirichlet boundary control problems are more involved in both the definition of discrete schemes and the a priori error analysis, since the regularity of the involved state variable is low.

Finite element approximations of optimal control problems are important for the numerical treatment of optimal control problems related to practical applications, see e.g. \cite[Ch.4]{HPUU2009}. An overview on the numerical a priori and a posteriori analysis for elliptic control problems can be found in \cite[Ch.3]{HPUU2009} and \cite{Liu.WB;Yan.NN2008}, respectively. To the best of the authors knowledge the first contribution to parabolic optimal control problems is given in \cite{Winther.B1978}. The state of the art in the numerical a priori analysis of distributed parabolic optimal control problems can be found in \cite{Meidner.D;Vexler.B2008A,Meidner.D;Vexler.B2008B}. More recent contributions with higher order in time Galerkin schemes can be found in \cite{Apel.T;Flaig.TG2010,Meidner.D;Vexler.B2011,DHV2014}. For boundary control problems with parabolic equations we refer to \cite{Gong.W;Yan.NN2009}. Residual-based a posteriori error estimates are presented in \cite{Liu.WB;Ma.HP;Tang.T;Yan.NN2004} and \cite{Liu.WB;Yan.NN2003}. There is a long list of contributions to boundary control of elliptic PDEs, see e.g. \cite{Casas.E;Raymond.JP2006,Deckelnick.K;Gunther.A;Hinze.M2009SICON,French.DA;King.JT1991,Geveci.T1979,Gong.W;Yan.NN2011SICON,HinzeMatthes2009,May.S;Rannacher.R;Vexler.B2008,Vexler.B2007}. Further references can be found in \cite[Ch.3]{HPUU2009}.

In this paper we use the very weak solution concept for the state equation and $L^2(0,T;L^2(\Gamma))$ as control space to argue the existence of a unique solution to the optimal control problems (\ref{OPT})-(\ref{OPT_state}). For the numerical discretization of the optimal control problem we discretize the state using standard piecewise linear and continuous finite elements in space and dG(0) scheme in time. The Dirichlet boundary conditions are approximated based on the space-time $L^2$-projection. The control is discretized in space either by piecewise linear finite elements or implicitly through the discretization of the adjoint state, the so-called variational discretization (see \cite{Hinze.M2005}). For both cases we derive a priori error bounds for the state and control in the $L^2$-norm for problems posed on polygonal domains. As main result we obtain the error bound
\begin{eqnarray}
\|u-U_{hk}\|_{L^2(L^2(\Gamma))}+\|y-Y_{hk}\|_{L^2(L^2(\Omega))}\leq (h^{1\over 2}+k^{{1\over 4}})\label{main}
\end{eqnarray}
for the optimal solution $(y,u)$ of (\ref{OPT}), where $Y_{hk}$ and $U_{hk}$ denote the optimal discrete state and control. We present several numerical examples which support our theoretical findings.

The rest of our paper is organized as follows. In Section 2 we
present the analytical setting of the parabolic Dirichlet boundary control problem and argue the existence of a unique solution. In Section 3 we establish the fully discrete finite element approximation to the state equation and the corresponding stability results. Then we formulate the fully discrete approximation for parabolic Dirichlet boundary control problems. The a priori error
analysis for the finite element approximation and the variational discretization of the optimal control problems
posed on convex, polygonal or polyhedral domains is studied in Section
4. Furthermore, we present some numerical
experiments  in Section 5 to support our theoretical results.


\section{Optimal control problem}\setcounter{equation}{0}
Let $\Omega\subset \mathbb{R}^2$ be a polygonal convex domain. For $m\geq0$ and $1\leq s\leq\infty$, we adopt the standard notation $W^{m,s}(\Omega)$ for Sobolev spaces on $\Omega$ with norm $\|\cdot\|_{m,s,\Omega}$ and seminorm $|\cdot|_{m,s,\Omega}$, where $H^m(\Omega)=W^{m,2}(\Omega)$, $\|\cdot\|_{m,\Omega}=\|\cdot\|_{m,2,\Omega}$ and $|\cdot|_{m,\Omega}=|\cdot|_{m,2,\Omega}$ for $s=2$. Note that $H^0(\Omega)=L^2(\Omega)$ and $H_0^1(\Omega)=\{v\in H^1(\Omega);\ v=0\ \mbox{on}\ \partial\Omega\}$.
We denote by $L^r(0,T;W^{m,s}(\Omega))$ the Banach space of all $L^r$ integrable functions from $[0,T]$ into $W^{m,s}(\Omega)$ with norm $\|v\|_{L^r(0,T;W^{m,s}(\Omega))}=
\Big(\int_0^T\|v\|^r_{m,s,\Omega}dt\Big)^{{1\over r}}$ for $1\leq r<\infty$, and with the standard modification for $r=\infty$. For a Banach space $Y$, we use the abbreviations $L^2(Y)=L^2(0,T;Y)$, $H^s(Y)=H^s(0,T;Y)$, $s=[0,\infty)$, and $C(Y)=C([0,T];Y)$. We denote the $L^2$-inner products
on $L^2(\Omega)$, $L^2(\Omega_T)$ and $L^2(\Gamma)$ by $(\cdot,\cdot)$, $(\cdot,\cdot)_{\Omega_T}$ and $\langle\cdot,\cdot\rangle$, respectively. In addition $c$ and $C$ denote  generic positive constants.


Let
\begin{eqnarray*}
a(y,w)=\int_{\Omega}\nabla y\cdot\nabla w &\forall\ y,w\in
H^1(\Omega).
\end{eqnarray*}
The standard weak form for the parabolic equation (\ref{OPT_state}) is to find $y\in L^2(H^1(\Omega))\cap H^1(H^{-1}(\Omega))$ with $y|_{\Sigma}=u$ and $y(\cdot,0)|_\Omega=y_0(\cdot)$ such that
\begin{eqnarray}
\big(\frac{\partial y}{\partial t},v\big)+a(y, v)=(f,v)\ \ \mbox{a.a.}\ t\in (0,T],
\ \forall\ v\in H_0^1(\Omega).\label{stand_weak}
\end{eqnarray}
This setting requires $u\in L^2(H^{1\over 2}(\Gamma))$. Motivated by practical considerations (see e.g. the discussion in \cite{Berggren.M2004}) we are interested in controls $u\in U_{ad}$ defined in (\ref{control_set}). For a proper treatment of the state equation in this case we in the present paper use the transposition technique
introduced by Lions and Magenes (see \cite{Lions.JL;Magenes.E1972}) to argue the existence of a unique solution to the state equation (\ref{OPT_state}). The very weak form of (\ref{OPT_state}) that we shall utilize reads: Find $y\in L^2(L^2(\Omega))$ such that
\begin{eqnarray}
\int_{\Omega_T}y(-z_t-\Delta z)dxdt=-\int_{\Sigma}u\partial_{{n}}zdsdt+\int_{\Omega_T}fzdxdt+\int_\Omega y_0z(\cdot,0)dx\nonumber\\
\ \ \forall\ z\in L^2(H^2(\Omega)\cap H_0^1(\Omega))\cap H^1(L^2(\Omega))\label{weak}
\end{eqnarray}
with $z(\cdot,T)=0$ holds, where $\partial_{{n}}v:=\nabla v\cdot n$ with ${n}$
denoting the unit outward normal to $\Gamma$. Then the existence and uniqueness of a very weak solution of (\ref{weak}) is shown in the following lemma (see, e.g, \cite{Lions.JL;Magenes.E1972})
\begin{Lemma}\label{Lemma:state}
For $y_0\in L^2(\Omega)$, $f\in L^2(L^2(\Omega))$ and $u\in L^2(L^2(\Gamma))$, there exists a unique very weak solution $y\in L^2(L^2(\Omega))$ of (\ref{weak}) satisfying
\begin{eqnarray}
\|y\|_{L^2(L^2(\Omega))}\leq C\big(\|y_0\|_{L^2(\Omega)}+\|f\|_{L^2(L^2(\Omega))}+\|u\|_{L^2(L^2(\Gamma))}\big).\label{existence}
\end{eqnarray}
\end{Lemma}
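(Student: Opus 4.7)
The plan is to follow the classical Lions--Magenes transposition argument, realising $y$ as the Riesz representative of a suitable linear functional defined via the dual parabolic problem, so the heart of the matter is proving that this functional is continuous on $L^2(L^2(\Omega))$.

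First I would introduce, for an arbitrary $g\in L^2(L^2(\Omega))$, the adjoint (backward) heat problem
\begin{equation*}
-z_t-\Delta z=g\ \text{in}\ \Omega_T,\qquad z=0\ \text{on}\ \Sigma,\qquad z(\cdot,T)=0\ \text{in}\ \Omega.
\end{equation*}
Because $\Omega$ is a convex polygon, $-\Delta$ with homogeneous Dirichlet data is maximally regular: after the time reversal $t\mapsto T-t$ this is a standard forward heat equation with zero initial data and $L^2(L^2)$ source, so the classical maximal $L^2$-regularity result together with elliptic $H^2$-regularity on convex polygons yields a unique $z\in L^2(H^2(\Omega)\cap H_0^1(\Omega))\cap H^1(L^2(\Omega))$ with
\begin{equation*}
\|z_t\|_{L^2(L^2(\Omega))}+\|z\|_{L^2(H^2(\Omega))}+\|z\|_{C(H_0^1(\Omega))}\leq C\|g\|_{L^2(L^2(\Omega))}.
\end{equation*}
This is the step where convexity of $\Omega$ is essential; any lack of $H^2$-regularity would prevent controlling the normal trace $\partial_n z$.

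Next I would define the linear functional
\begin{equation*}
F(g):=-\int_{\Sigma}u\,\partial_n z\,ds\,dt+\int_{\Omega_T}fz\,dx\,dt+\int_{\Omega}y_0\,z(\cdot,0)\,dx
\end{equation*}
on $L^2(L^2(\Omega))$ and bound each term. The volume term is immediate from Cauchy--Schwarz and the above estimate. The initial-time term uses $H^1(L^2(\Omega))\hookrightarrow C([0,T];L^2(\Omega))$, so $\|z(\cdot,0)\|_{L^2(\Omega)}\leq C\|g\|_{L^2(L^2(\Omega))}$. The boundary term is the main technical point: the trace theorem $H^2(\Omega)\to H^{1/2}(\Gamma)\hookrightarrow L^2(\Gamma)$ applied componentwise to $\nabla z(\cdot,t)$ gives, after integration in time, $\|\partial_n z\|_{L^2(L^2(\Gamma))}\leq C\|z\|_{L^2(H^2(\Omega))}\leq C\|g\|_{L^2(L^2(\Omega))}$. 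Combining the three bounds yields
\begin{equation*}
|F(g)|\leq C\bigl(\|y_0\|_{L^2(\Omega)}+\|f\|_{L^2(L^2(\Omega))}+\|u\|_{L^2(L^2(\Gamma))}\bigr)\,\|g\|_{L^2(L^2(\Omega))}.
\end{equation*}

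By the Riesz representation theorem there exists a unique $y\in L^2(L^2(\Omega))$ with $(y,g)_{\Omega_T}=F(g)$ for every $g\in L^2(L^2(\Omega))$, which is exactly (\ref{weak}); the norm bound (\ref{existence}) is the operator norm of $F$. For uniqueness I would note that the mapping $g\mapsto z$ is surjective onto the space of admissible test functions (indeed any admissible $z$ arises as the solution with $g:=-z_t-\Delta z\in L^2(L^2(\Omega))$), so if $y\in L^2(L^2(\Omega))$ satisfies the homogeneous version of (\ref{weak}) then $(y,g)_{\Omega_T}=0$ for every $g\in L^2(L^2(\Omega))$, forcing $y=0$. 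The main obstacle throughout is the boundary trace estimate, which relies crucially on the $L^2(H^2)$-regularity of the adjoint problem on the convex polygonal domain.
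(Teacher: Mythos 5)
Your proof is correct and follows essentially the same transposition argument as the paper: solve the backward dual problem with $L^2(H^2(\Omega))\cap H^1(L^2(\Omega))$ regularity on the convex domain, bound $\partial_n z$ in $L^2(L^2(\Gamma))$ by the trace theorem, and obtain $y$ by duality. The only cosmetic difference is that you treat all three data terms in a single Riesz functional, whereas the paper first disposes of the $(f,y_0)$ contribution via the standard weak solution with $u\equiv 0$ and applies transposition only to the boundary datum through the adjoint operator $T^*$.
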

\begin{proof}
For $y_0\in L^2(\Omega)$, $f\in L^2(L^2(\Omega))$ and $u\equiv 0$ it is straightforward to show that (\ref{OPT_state}) admits a unique solution $y\in L^2(H^1_0(\Omega))\cap H^1(H^{-1}(\Omega))$ in the sense of (\ref{stand_weak}), which also satisfies (\ref{existence}). To prove the lemma in the case $u\neq 0$ it is sufficient to consider the case $f\equiv 0$, $y_0\equiv 0$, where we follow the constructive approach of \cite{Deckelnick.K;Gunther.A;Hinze.M2009SICON}. For each $g\in L^2(L^2(\Omega))$ we denote by $z\in L^2(H^2(\Omega)\cap H_0^1(\Omega))\cap H^1(L^2(\Omega))$ the solution of \begin{equation}\label{parabolic_backward}
\left\{ \begin{aligned}-\frac{\partial z}{\partial t} -\Delta z=g \ \ &\mbox{in}\
\Omega_T, \\
 \ z=0  \ \ \ &\mbox{on}\ \Sigma,\\
 z(T)=0\ \ \ &\mbox{in}\ \Omega.
\end{aligned} \right.
\end{equation}
 Then we have $\partial_{{n}}z\in H^{\frac{1}{4}}(L^2(\Gamma))$ according to \cite{Lions.JL;Magenes.E1972}. Moreover, from the fact that $z\in L^2(H^2(\Omega))$ and $z=0$ on $\Sigma$ we obtain that $\partial_{{n}}z\in L^2(H^{1\over 2}(\Gamma))$ according to Lemma A.2 in \cite{Casas.E;Mateos.M;Raymond.JP2009}. We denote by $T:L^2(L^2(\Omega))\rightarrow L^2(L^2(\Gamma))$ the continuous linear operator which is defined by $Tg:=-\partial_{{n}}z|_{\Sigma}$ and denote its adjoint by $T^*$. Then with $y=T^*u$ we have
\begin{eqnarray}\nonumber
\int_{\Omega_T}ygdxdt=\int_{\Omega_T}y(-z_t-\Delta z)dxdt=\int_{\Omega_T}T^*ugdxdt=-\int_{\Sigma}u\partial_{{n}}zdsdt,
\end{eqnarray}
which verifies that $y$ satisfies (\ref{weak}). The estimate (\ref{existence}) follows
by observing that
$$|\int_{\Omega_T}ygdxdt|\leq C\|u\|_{L^2(L^2(\Gamma))}\|\partial_{{n}}z\|_{L^2(L^2(\Gamma))}\leq C\|u\|_{L^2(L^2(\Gamma))}\|g\|_{L^2(L^2(\Omega))}.$$
\end{proof}

Now we are ready to formulate the optimal control problem considered in the present paper. It reads
\begin{equation}\label{OPT_weak}
\left\{ \begin{aligned}
\mbox{min}\ \ J(y,u)={1\over 2}\|y-y_d\|^2_{L^2(L^2(\Omega))}+{\alpha\over 2}\|u\|^2_{L^2(L^2(\Gamma))}\\
\mbox{over}\ \ (y,u)\in {L^2(L^2(\Omega))}\times {L^2(L^2(\Gamma))}\\
\mbox{subject to}\ \ (\ref{weak})\ \mbox{and}\ u\in U_{ad}.
\end{aligned} \right.
\end{equation}
By standard arguments (see, e.g, \cite{Lions.JL1971}), there exists a unique solution $(y,u)$ for problem (\ref{OPT_weak}). Let $J(u):=J(y(u),u)$ denote the reduced cost functional, where for each $u\in L^2(L^2(\Gamma))$ the state $y(u)$ is the unique very weak solution of (\ref{weak}). Then $J$ is infinitely often Fr\'{e}chet differentiable. Moreover, the first order sufficient and necessary optimality conditions for problem (\ref{OPT_weak}) are given by
\begin{Theorem}
Assume that $u\in L^2(L^2(\Gamma))$ is the unique solution of problem (\ref{OPT_weak}) and let $y$ be the associated state. Then there exists a unique adjoint state $z\in L^2(H^1_0(\Omega))\cap H^1(H^{-1}(\Omega))$ such that
\begin{equation}\label{OPT_adjoint}
\left\{ \begin{aligned}-\frac{\partial z}{\partial t} -\Delta z=y-y_d \ \ &\mbox{in}\
\Omega_T, \\
 \ z=0  \ \ \ &\mbox{on}\ \Sigma,\\
 z(T)=0\ \ \ &\mbox{in}\ \Omega,
\end{aligned} \right.
\end{equation}
and
\begin{eqnarray}
J'(u)(v-u)=\int_0^T\int_{\Gamma}(\alpha u-\partial_{{ n}}z)(v-u)dsdt\geq 0,\ \ \forall\ v\in U_{ad}.\label{optimality}
\end{eqnarray}
\end{Theorem}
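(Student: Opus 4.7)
The plan is to derive the optimality system from the Fréchet differentiability of the reduced cost functional $J(u):=J(y(u),u)$, and to use the adjoint state $z$ as a device to rewrite the derivative in the boundary form $(\alpha u - \partial_{n}z)$.

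First, I would observe that Lemma \ref{Lemma:state} yields an affine linear, continuous control-to-state map $u\mapsto y(u)$ from $L^2(L^2(\Gamma))$ into $L^2(L^2(\Omega))$ (the linear part being the operator $T^*$ constructed in the proof of Lemma \ref{Lemma:state}). Consequently $J(u)$ is a strictly convex quadratic functional on the Hilbert space $L^2(L^2(\Gamma))$ and hence infinitely Fréchet differentiable, with
\begin{equation*}
J'(u)(v-u)=\int_{\Omega_T}(y-y_d)\,(y(v)-y(u))\,dxdt+\alpha\int_{\Sigma}u(v-u)\,dsdt.
\end{equation*}

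Second, I would define the adjoint state $z$ as the standard weak solution of the backward parabolic problem (\ref{OPT_adjoint}), whose right-hand side $y-y_d$ lies in $L^2(L^2(\Omega))$. On the convex polygonal domain $\Omega$, elliptic $H^2$-regularity together with classical parabolic theory (as in Lions--Magenes) provides a unique $z\in L^2(H^2(\Omega)\cap H_0^1(\Omega))\cap H^1(L^2(\Omega))$, which in particular belongs to the space $L^2(H_0^1(\Omega))\cap H^1(H^{-1}(\Omega))$ claimed in the statement. Moreover, by the same arguments invoked in the proof of Lemma \ref{Lemma:state} (the regularity $\partial_{n}z\in H^{1/4}(L^2(\Gamma))$ and, via Lemma A.2 of \cite{Casas.E;Mateos.M;Raymond.JP2009}, also $\partial_{n}z\in L^2(H^{1/2}(\Gamma))$), the normal trace $\partial_{n}z$ lies in $L^2(L^2(\Gamma))$, so that the boundary integral in (\ref{optimality}) is well-defined.

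Third, the key identity. Since $w:=y(v)-y(u)$ is the very weak solution of (\ref{weak}) corresponding to $f\equiv 0$, $y_0\equiv 0$ and boundary data $v-u$, and since the adjoint state $z$ is an admissible test function in (\ref{weak}) (it vanishes on $\Sigma$ and at $t=T$ and lies in $L^2(H^2\cap H_0^1)\cap H^1(L^2)$), testing (\ref{weak}) with $z$ and using $-z_t-\Delta z=y-y_d$ yields
\begin{equation*}
\int_{\Omega_T}w(y-y_d)\,dxdt=-\int_{\Sigma}(v-u)\,\partial_{n}z\,dsdt.
\end{equation*}
Inserting this into the expression for $J'(u)(v-u)$ produces exactly the formula in (\ref{optimality}), and optimality of $u$ over the convex set $U_{ad}$ gives the variational inequality $J'(u)(v-u)\geq 0$ for all $v\in U_{ad}$. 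Uniqueness of $z$ is immediate from uniqueness of the linear parabolic problem (\ref{OPT_adjoint}).

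The main obstacle, as anticipated in Lemma \ref{Lemma:state}, is the duality step: justifying that $\partial_{n}z$ is regular enough for $\int_\Sigma(v-u)\partial_{n}z\,dsdt$ to make sense, and that $z$ is an admissible test function in the transposition identity. Both points rely critically on the convexity of the polygonal domain (for $H^2$-regularity of the Laplacian) and on the Casas--Mateos--Raymond trace lemma already cited; with these tools in hand the computation is purely formal.
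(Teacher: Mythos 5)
Your proof is correct and follows exactly the standard adjoint-based derivation that the paper itself relies on (the paper states this theorem without proof, referring to standard arguments, and your duality step of testing the very weak formulation (\ref{weak}) for $y(v)-y(u)$ with the adjoint $z$ is precisely the mechanism the paper uses elsewhere, e.g.\ in deriving (\ref{optimality1}) and its discrete analogue after (\ref{reconstruction})). The regularity justification for $\partial_n z\in L^2(L^2(\Gamma))$ via convexity of $\Omega$ and the Casas--Mateos--Raymond trace lemma matches the paper's own argument in Theorem \ref{reg_polygonal}.
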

We note that (\ref{optimality}) is equivalent to
\begin{eqnarray}
J'(u)(v-u)=\int_{\Sigma}\alpha u(v-u)dsdt+\int_{\Omega_T}(y-y_d)(y(v)-y)dxdt\geq 0,\ \ \forall\ v\in U_{ad}\label{optimality1}
\end{eqnarray}
or
\begin{eqnarray}
u=P_{U_{ad}}\big(\frac{1}{\alpha}\partial_{{ n}}z\big),\label{projection}
\end{eqnarray}
where for each $v\in L^2(L^2(\Gamma))$, $y(v)$ is the solution of problem (\ref{weak}) with $u$ replaced by $v$, and $P_{U_{ad}}:L^2(L^2(\Gamma))\rightarrow U_{ad}$ denotes the orthogonal projection.

We now turn to the regularity properties of optimal controls $u$ on $\Sigma$. The proof of the following theorem can be found in, e.g, \cite{Kunisch.K;Vexler.B2007}.
\begin{Theorem}\label{reg_polygonal}
Let $(y,u,z)\in L^2(L^2(\Omega))\times L^2(L^2(\Gamma))\times L^2(H^1_0(\Omega))\cap H^1(H^{-1}(\Omega))$ be the solution of optimal control problem (\ref{OPT_weak})-(\ref{optimality1}), assume that $\Omega$ is a bounded, convex polygonal or polyhedral domain in $\mathbb{R}^n$, $f\in L^2(L^2(\Omega))$, $y_d\in L^2(L^2(\Omega))$ and $y_0\in L^2(\Omega)$. Then we have
\begin{eqnarray}
u\in L^2(H^{1\over 2}(\Gamma))\cap H^{\frac{1}{4}}(L^2(\Gamma)),\ \ \ y\in L^2(H^1(\Omega))\cap H^{\frac{1}{2}}(L^2(\Omega)),\label{polygoanl_regularity_yu}
\end{eqnarray}
and
\begin{eqnarray}
 z\in L^2(H^2(\Omega)\cap H_0^1(\Omega))\cap H^1(L^2(\Omega)).\label{polygoanl_regularity_p}
\end{eqnarray}
\end{Theorem}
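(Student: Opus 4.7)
The plan is to run a bootstrap argument driven by the optimality system, starting from the a priori admissible regularity $u\in L^2(L^2(\Gamma))$ and using successively the very weak solution estimate, parabolic regularity of the adjoint equation on a convex polygonal domain, a trace/interpolation step, and finally the projection formula (\ref{projection}) to transfer regularity back onto the control.

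First I would fix the optimal $u\in L^2(L^2(\Gamma))$. Lemma \ref{Lemma:state} then yields $y\in L^2(L^2(\Omega))$, so the right-hand side $y-y_d$ of the adjoint equation (\ref{OPT_adjoint}) belongs to $L^2(L^2(\Omega))$. Since $\Omega$ is a bounded convex polygonal (or polyhedral) domain, the elliptic shift theorem on $\Omega$ combined with standard maximal $L^2$-regularity for the backward heat equation with homogeneous Dirichlet data yields
\begin{eqnarray*}
z\in L^2(H^2(\Omega)\cap H_0^1(\Omega))\cap H^1(L^2(\Omega)),
\end{eqnarray*}
which is exactly (\ref{polygoanl_regularity_p}). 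This is the key regularity input; convexity is used here to avoid corner singularities in the spatial shift theorem.

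Next I would extract boundary regularity of $\partial_n z$. From $z\in L^2(H^2(\Omega))$ with $z=0$ on $\Sigma$, the argument via Lemma A.2 of \cite{Casas.E;Mateos.M;Raymond.JP2009} (used already in the proof of Lemma \ref{Lemma:state}) gives $\partial_n z\in L^2(H^{1/2}(\Gamma))$. For the time regularity, interpolate between $z\in H^1(L^2(\Omega))$ and $z\in L^2(H^2(\Omega))$ to deduce $z\in H^{1/2}(H^1(\Omega))$, whence the trace $\partial_n z\in H^{1/4}(L^2(\Gamma))$ (the $1/4$ loss being the standard parabolic trace loss, cf.\ \cite{Lions.JL;Magenes.E1972}). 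Now I invoke the projection formula $u=P_{U_{ad}}(\frac{1}{\alpha}\partial_n z)$. The operator $P_{U_{ad}}$ is the pointwise clipping to $[u_a,u_b]$, hence Lipschitz; since Lipschitz truncation preserves fractional Sobolev regularity of order at most $1/2$ (Stampacchia), one obtains
\begin{eqnarray*}
u\in L^2(H^{1/2}(\Gamma))\cap H^{1/4}(L^2(\Gamma)),
\end{eqnarray*}
which is the first half of (\ref{polygoanl_regularity_yu}).

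Finally, with this improved Dirichlet datum I would upgrade $y$. Since $u\in L^2(H^{1/2}(\Gamma))$, the state equation (\ref{OPT_state}) admits a standard weak solution with $y\in L^2(H^1(\Omega))\cap H^1(H^{-1}(\Omega))$ by (\ref{stand_weak}); a lifting argument (extend $u$ harmonically or by a smooth boundary lift) together with the time regularity $u\in H^{1/4}(L^2(\Gamma))$ and interpolation between the spatial estimate and the $L^2(L^2(\Omega))$ estimate from Lemma \ref{Lemma:state} yields $y\in H^{1/2}(L^2(\Omega))$, completing (\ref{polygoanl_regularity_yu}). The main obstacle I anticipate is the careful tracking of the Lions--Magenes anisotropic trace spaces in the parabolic setting on a polygon, in particular justifying $\partial_n z\in H^{1/4}(L^2(\Gamma))$ from the parabolic maximal regularity of $z$ and checking that the nonsmooth clipping $P_{U_{ad}}$ does not destroy this temporal fractional regularity; both points, however, fall exactly at the critical exponent $1/4$ in time and $1/2$ in space, where Lipschitz truncation is still admissible.
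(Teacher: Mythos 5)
Your proposal is correct and follows essentially the same route as the paper's own proof: Lemma \ref{Lemma:state} gives $y\in L^2(L^2(\Omega))$, maximal parabolic regularity on the convex domain gives (\ref{polygoanl_regularity_p}), the normal trace then lies in $L^2(H^{1/2}(\Gamma))\cap H^{1/4}(L^2(\Gamma))$ (the paper cites \cite{Casas.E;Mateos.M;Raymond.JP2009} for exactly this), the projection formula (\ref{projection}) transfers this to $u$, and Lions--Magenes gives the stated regularity of $y$. Your additional remarks on the interpolation behind the $H^{1/4}$ temporal trace and on the Lipschitz clipping preserving fractional regularity are details the paper leaves to the cited references.
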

\begin{proof}
From $f\in L^2(L^2(\Omega))$, $y_0\in L^2(\Omega)$ and $u\in L^2(L^2(\Gamma))$ we conclude that $y\in L^2(L^2(\Omega))$ according to Lemma \ref{Lemma:state}. Thus, $y_d\in L^2(L^2(\Omega))$ implies $z\in L^2(H^2(\Omega)\cap H_0^1(\Omega))\cap H^1(L^2(\Omega))$, which in turn implies $\partial_n z\in L^2(H^{1\over 2}(\Gamma))\cap H^{\frac{1}{4}}(L^2(\Gamma))$ (see \cite{Casas.E;Mateos.M;Raymond.JP2009}). From (\ref{projection})  we obtain that  $u\in L^2(H^{1\over 2}(\Gamma))\cap H^{\frac{1}{4}}(L^2(\Gamma))$ and thus $y\in L^2(H^1(\Omega))\cap H^{\frac{1}{2}}(L^2(\Omega))$ (see \cite{Lions.JL;Magenes.E1972}). This completes the proof.
\end{proof}

Although we only consider the finite element approximation to optimal control problems posed on polygonal or polyhedral domains, we note that for optimal control problems posed on curved domains with smooth boundary we can get higher regularities (see e.g. \cite{Kunisch.K;Vexler.B2007}).
\begin{Theorem}\label{reg_curve}
Let $(y,u,z)\in L^2(L^2(\Omega))\times L^2(L^2(\Gamma))\times L^2(H^1(\Omega))\cap H^1(H^{-1}(\Omega))$ be the solution of optimal control problem (\ref{OPT_weak})-(\ref{optimality1}), assume that $\Omega$ is a bounded domain in $\mathbb{R}^n$ with sufficiently smooth boundary $\Gamma$, $f\in L^2(L^2(\Omega))$, $y_d\in L^2(H^1(\Omega))\cap H^{\frac{1}{2}}(L^2(\Omega))$ and $y_0\in H^{\frac{1}{2}-\epsilon}(\Omega)$. Then we have
\begin{eqnarray}
u\in L^2(H^1(\Gamma))\cap H^{\frac{1}{2}}(L^2(\Gamma)),\ \ \ y\in L^2(H^{\frac{3}{2}-\epsilon}(\Omega))\cap H^{\frac{3-2\epsilon}{4}}(L^2(\Omega))\label{regularity_yu}
\end{eqnarray}
and
\begin{eqnarray}
 z\in L^2(H^3(\Omega)\cap H_0^1(\Omega))\cap H^{{3\over 2}}(L^2(\Omega))\label{regularity_p}
\end{eqnarray}
for every $\epsilon \in (0,\frac{1}{2}]$.
\end{Theorem}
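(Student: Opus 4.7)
The plan is to extend the bootstrap argument that proves Theorem \ref{reg_polygonal} by one additional cycle, exploiting the fact that on a smooth domain the maximal parabolic regularity gain at each step is larger than in the polygonal case. The cycle consists of three ingredients: (i) regularity for the state equation (\ref{OPT_state}) given the boundary datum $u$; (ii) maximal parabolic regularity for the adjoint equation (\ref{OPT_adjoint}) given the right-hand side $y-y_d$; (iii) transfer of regularity from $\partial_{n} z$ to $u$ through the projection formula (\ref{projection}).

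First pass (essentially the proof of Theorem \ref{reg_polygonal}): starting from $u\in L^2(L^2(\Gamma))$ we get $y\in L^2(L^2(\Omega))$ via Lemma \ref{Lemma:state}, hence $y-y_d\in L^2(L^2(\Omega))$ and parabolic regularity on $\Omega$ gives $z\in L^2(H^2\cap H^1_0)\cap H^1(L^2)$. The Lions--Magenes trace theorem yields $\partial_{n} z\in L^2(H^{1/2}(\Gamma))\cap H^{1/4}(L^2(\Gamma))$, and the projection formula (\ref{projection}) transfers this regularity to $u$. Reinserting into (\ref{OPT_state}), Lions--Magenes theory on the smooth domain raises the state to $y\in L^2(H^1(\Omega))\cap H^{1/2}(L^2(\Omega))$.

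Second pass: the right-hand side of (\ref{OPT_adjoint}) now lies in $L^2(H^1(\Omega))\cap H^{1/2}(L^2(\Omega))$ because $y_d$ is assumed to have that regularity. Applying maximal parabolic regularity on the smooth domain raises $z$ by two spatial derivatives and one time derivative, giving $z\in L^2(H^3(\Omega)\cap H^1_0(\Omega))\cap H^{3/2}(L^2(\Omega))$, which is (\ref{regularity_p}). The normal trace (smoothness of $\Gamma$ together with anisotropic Lions--Magenes interpolation between $L^2(H^3)$ and $H^{3/2}(L^2)$) lies at least in $L^2(H^{3/2}(\Gamma))\cap H^{3/4}(L^2(\Gamma))$, and the projection formula combined with the fact that the pointwise Lipschitz truncation $P_{U_{ad}}$ preserves $L^2(H^1(\Gamma))$ and $H^{1/2}(L^2(\Gamma))$ gives the first claim of (\ref{regularity_yu}). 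Feeding this improved $u$ back into the state equation together with $y_0\in H^{1/2-\epsilon}(\Omega)$ and $f\in L^2(L^2(\Omega))$, Lions--Magenes theory on smooth domains produces $y\in L^2(H^{3/2-\epsilon}(\Omega))\cap H^{(3-2\epsilon)/4}(L^2(\Omega))$, completing (\ref{regularity_yu}).

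The principal obstacle is the projection step: the truncation $P_{U_{ad}}(v)=\max(u_a,\min(u_b,v))$ is a pointwise Lipschitz map and preserves fractional Sobolev regularity only up to order one, which is precisely why the regularity of $u$ saturates at $L^2(H^1(\Gamma))\cap H^{1/2}(L^2(\Gamma))$ even though $\partial_{n} z$ carries strictly higher regularity; any further bootstrapping would therefore not improve $u$. A secondary technical point is the anisotropic time-fractional trace regularity for $\partial_{n} z$ needed in the second pass, which requires the smoothness of $\Gamma$; this is exactly the ingredient that fails at polygonal corners and explains why the sharper estimates of this theorem are not available in Theorem \ref{reg_polygonal}. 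The $\epsilon$-loss in (\ref{regularity_yu}) is the classical compatibility defect between $y_0|_\Gamma$ and $u(\cdot,0)$ at the space-time corner $\{t=0\}\times\Gamma$.
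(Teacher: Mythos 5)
Your proposal is correct and follows essentially the same route as the paper: starting from the conclusions of Theorem \ref{reg_polygonal}, one bootstrap cycle through the adjoint equation (using the smoothness of $\Gamma$ and $y_d\in L^2(H^1(\Omega))\cap H^{\frac{1}{2}}(L^2(\Omega))$ to get $z\in L^2(H^3(\Omega))\cap H^{\frac{3}{2}}(L^2(\Omega))$), the parabolic trace theorem for $\partial_n z$, the projection formula (\ref{projection}) for $u$, and Lions--Magenes regularity for the state. Your additional remarks on why the truncation saturates the regularity of $u$ at the anisotropic order-one level and on the role of the $\epsilon$-loss are accurate elaborations of steps the paper leaves implicit.
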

\begin{proof}
From Theorem \ref{reg_polygonal} we already have $z\in L^2(H^2(\Omega)\cap H_0^1(\Omega))\cap H^1(L^2(\Omega))$, $u\in L^2(H^{1\over 2}(\Gamma))\cap H^{\frac{1}{4}}(L^2(\Gamma))$ and $y\in L^2(H^1(\Omega))\cap H^{\frac{1}{2}}(L^2(\Omega))$. If $y_d\in L^2(H^1(\Omega))\cap H^{\frac{1}{2}}(L^2(\Omega))$, since the boundary $\Gamma$ of domain $\Omega$ is sufficiently smooth, we from \cite{Lions.JL;Magenes.E1972} have the improved regularity $z\in L^2(H^{3}(\Omega))\cap H^{3\over 2}(L^2(\Omega))$. The trace theorem for parabolic equations implies $\partial_n z\in L^2(H^{3\over 2}(\Gamma))\cap H^{\frac{3}{4}}(L^2(\Gamma))$. From (\ref{projection}) we then conclude $u\in L^2(H^1(\Gamma))\cap H^{\frac{1}{2}}(L^2(\Gamma))$. This implies that $y\in L^2(H^{\frac{3}{2}-\epsilon}(\Omega))\cap H^{\frac{3-2\epsilon}{4}}(L^2(\Omega))$ for each $\epsilon >0$, see, e.g., \cite{Lions.JL;Magenes.E1972}, which completes the proof.
\end{proof}

In our analysis we frequently use results of the following backward in time parabolic problem:
\begin{eqnarray}\label{auxiliary}
\left\{\begin{aligned}
-w_t-\Delta w=g\ \ \ \mbox{in}\ \Omega_T,\\
w=0\ \ \mbox{on}\ \Sigma,\\
w(T)=0\ \ \mbox{in}\ \Omega.
\end{aligned}\right.
\end{eqnarray}
If $g\in L^2(L^2(\Omega))$, then (\ref{auxiliary}) has a unique solution $w\in L^2(H^2(\Omega)\cap H_0^1(\Omega))\cap H^1(L^2(\Omega))$ satisfying
\begin{eqnarray}
\|w\|_{L^2(H^2(\Omega))}+\|w_t\|_{L^2(L^2(\Omega))}\leq C\|g\|_{L^2(L^2(\Omega))},\label{adjoint_stability}\\
\|w(0)\|_{1,\Omega}\leq C\|g\|_{L^2(L^2(\Omega))}.\label{initial_stability}
\end{eqnarray}


\section{Finite element discretization of the state equation and optimal control problems} \setcounter{equation}{0}
At first let us consider the finite element approximation of the state equation (\ref{OPT_state}). For the spatial discretization we consider conforming Lagrange triangular
elements.

We assume that $\Omega$ is a polygonal domain. Let $\mathscr{T}^h$ be a quasi-uniform partitioning of
$\Omega$ into disjoint regular triangles $\tau$, so that
$\bar{\Omega}=\bigcup_{\tau\in \mathscr{T}^h}\bar{\tau}$.
Associated with $\mathscr{T}^h$ is a finite dimensional
subspace $V^h$ of $C(\bar\Omega)$, such that for $\chi\in V^h$ and $\tau\in \mathscr{T}^h$, $\chi|_{\tau}$ are
piecewise linear polynomials. We set $V^h_0=V^h\cap H_0^1(\Omega)$.

Let $\mathscr{T}_U^h$ be a partitioning of $\Gamma$ into disjoint
regular segments $s$, so that
$\Gamma=\bigcup_{s\in \mathscr{T}_U^h}\bar s$. Associated with
$\mathscr{T}_U^h$ is another finite dimensional subspace $U^h$ of
$L^2(\Gamma)$, such that for $\chi\in U^h$ and $s\in \mathscr{T}_U^h$, $\chi|_s$ are piecewise linear polynomials. Here
 we suppose that $\mathscr{T}_U^h$ is the restriction of $\mathscr{T}^h$ on the boundary $\Gamma$ and $U^h=V^h(\Gamma)$, where $V^h(\Gamma)$ is the restriction of $V^h$ on the boundary $\Gamma$. 

For the standard Lagrange interpolation operator $I_h:C(\bar\Omega)\rightarrow V^h$, we have the following error estimate (see, e.g., \cite{Ciarlet.PG1978})
\begin{eqnarray}
\|w-I_hw\|_{l,\Omega}\leq Ch^{m-l}\|v\|_{m,\Omega},\ \ \ 0\leq l\leq 1\leq m\leq 2.\label{interpolation_error}
\end{eqnarray}
To define our discrete scheme, we need to introduce some projection operators. Here $Q_h:L^2(\Gamma)\rightarrow V^h(\Gamma)$ and $\tilde Q_h:L^2(\Omega)\rightarrow V^h_0$ denote the orthogonal projection operators. Furthermore, $R_h: H^1(\Omega)\rightarrow V^h_0$ denotes the Ritz projection operator defined as
\begin{eqnarray}
a(R_hw,v_h)=a(w,v_h),\ \ \forall\ v_h\in V_0^h.\label{elliptic_proj}
\end{eqnarray}
It is well known that the Ritz projection satisfies
\begin{eqnarray}
\|w-R_hw\|_{s,\Omega}\leq Ch^{l-s}\|w\|_{l,\Omega},\ \ w\in H_0^1(\Omega)\cap H^l(\Omega),\ \forall\ 0\leq s\leq 1\leq l\leq 2.\label{Ritz_error}
\end{eqnarray}
For the $L^2(\Gamma)$ projection operator $Q_h$ we also have
\begin{eqnarray}
\|w-Q_hw\|_{0,\Gamma}\leq Ch^{s-{1\over 2}}\|w\|_{s,\Omega}\ \ \mbox{for}\ w\in H^s(\Omega), \ \frac{1}{2}\leq s\leq 2,\label{boundary_trace}
\end{eqnarray}
and
\begin{eqnarray}
\|(I-Q_h)\partial_{{n}}w\|_{0,\Gamma}\leq Ch^{{1\over 2}}\|w\|_{2,\Omega}\ \ \mbox{for}\ w\in H^2(\Omega).\label{boundary_normal}
\end{eqnarray}

In our following analysis we need estimates for discrete harmonic functions.
\begin{Lemma}
Let $v_h\in V^h(\Gamma)$, and suppose that $w\in H^1(\Omega)$ is the solution of
\begin{eqnarray}
a(w,\phi)=0,\ \ \ \forall\ \phi\in H^1_0(\Omega),\ \ \ w=v_h \ \mbox{on}\ \Gamma\label{cont_harmonic}
\end{eqnarray}
and $w_h\in V^h$ is the solution of
\begin{eqnarray}
a(w_h,\phi_h)=0,\ \ \ \forall\ \phi_h\in V^h_0,\ \ \ w_h=v_h \ \mbox{on}\ \Gamma.\label{disc_harmonic}
\end{eqnarray}
Then
\begin{eqnarray}
\|w-w_h\|_{1,\Omega}\leq C\|v_h\|_{{1\over 2},\Gamma}\leq Ch^{-{1\over 2}}\|v_h\|_{0,\Gamma},\label{harmonic_error}\\
\|w_h\|_{0,\Omega}+h^{-{1\over 2}}\|w_h\|_{1,\Omega}\leq C\| v_h\|_{0,\Gamma},\label{discrete_harmonic}\\
\|w-w_h\|_{{1\over 2},\Omega}\leq C\|v_h\|_{0,\Gamma}.\label{Lipschitz_h}
\end{eqnarray}
\end{Lemma}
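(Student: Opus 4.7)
The plan is to prove the three estimates in order, extracting each from the one before together with one extra ingredient (a discrete trace lifting, a duality argument, and interpolation).

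First I would establish (\ref{harmonic_error}). The key observation is that $w-w_h\in H_0^1(\Omega)$, since both functions take the same trace $v_h$ on $\Gamma$, and by subtracting (\ref{disc_harmonic}) from (\ref{cont_harmonic}) we obtain the Galerkin orthogonality $a(w-w_h,\phi_h)=0$ for all $\phi_h\in V_0^h$. Coercivity of $a$ on $H_0^1(\Omega)$ combined with this orthogonality yields a Cé́a-type bound
$$\|w-w_h\|_{1,\Omega}\le C\inf_{\phi_h\in V_0^h}\|w-w_h-\phi_h\|_{1,\Omega}=C\inf_{\tilde w_h\in V^h,\ \tilde w_h|_\Gamma=v_h}\|w-\tilde w_h\|_{1,\Omega}.$$
Choosing $\tilde w_h$ to be a discrete lifting of $v_h$ (e.g.\ the Scott--Zhang quasi-interpolant of the continuous harmonic extension of $v_h$) with $\|\tilde w_h\|_{1,\Omega}\le C\|v_h\|_{1/2,\Gamma}$, and combining with the continuous trace lifting $\|w\|_{1,\Omega}\le C\|v_h\|_{1/2,\Gamma}$, gives the first inequality. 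The second inequality then follows from the standard inverse trace estimate $\|v_h\|_{1/2,\Gamma}\le Ch^{-1/2}\|v_h\|_{0,\Gamma}$ valid for finite-element functions on a quasi-uniform mesh.

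Next I would derive (\ref{discrete_harmonic}). The $H^1$-bound on $w_h$ is immediate from (\ref{harmonic_error}) by the triangle inequality, since $\|w_h\|_{1,\Omega}\le\|w\|_{1,\Omega}+\|w-w_h\|_{1,\Omega}\le Ch^{-1/2}\|v_h\|_{0,\Gamma}$. The $L^2$-bound requires a duality argument. Let $\phi\in H^2(\Omega)\cap H_0^1(\Omega)$ solve $-\Delta\phi=w_h$ with $\phi=0$ on $\Gamma$, so that $\|\phi\|_{2,\Omega}\le C\|w_h\|_{0,\Omega}$ by elliptic regularity. Green's formula gives
$$\|w_h\|_{0,\Omega}^2=a(w_h,\phi)-\int_\Gamma v_h\,\partial_n\phi\,ds.$$
Using the Galerkin orthogonality $a(w_h,R_h\phi)=0$ together with the Ritz error (\ref{Ritz_error}) controls $a(w_h,\phi)=a(w_h,\phi-R_h\phi)\le Ch\|w_h\|_{1,\Omega}\|\phi\|_{2,\Omega}$, while the boundary term is bounded by $\|v_h\|_{0,\Gamma}\|\partial_n\phi\|_{0,\Gamma}\le C\|v_h\|_{0,\Gamma}\|\phi\|_{2,\Omega}$. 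Absorbing $\|\phi\|_{2,\Omega}\le C\|w_h\|_{0,\Omega}$ and applying the already proved $H^1$-bound on $w_h$ yields $\|w_h\|_{0,\Omega}\le C\|v_h\|_{0,\Gamma}$.

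For (\ref{Lipschitz_h}) I would use interpolation between $L^2$ and $H^1$, so I first need an $L^2$-error bound. A second duality argument does this: let $\psi\in H^2(\Omega)\cap H_0^1(\Omega)$ solve $-\Delta\psi=w-w_h$, note that the boundary contribution in Green's formula vanishes because $w$ and $w_h$ share the trace $v_h$, and use Galerkin orthogonality again to obtain
$$\|w-w_h\|_{0,\Omega}^2=a(\psi-R_h\psi,w-w_h)\le Ch\|\psi\|_{2,\Omega}\|w-w_h\|_{1,\Omega}\le Ch\cdot h^{-1/2}\|v_h\|_{0,\Gamma}\|w-w_h\|_{0,\Omega},$$
so $\|w-w_h\|_{0,\Omega}\le Ch^{1/2}\|v_h\|_{0,\Gamma}$. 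Combining this with the already proved $\|w-w_h\|_{1,\Omega}\le Ch^{-1/2}\|v_h\|_{0,\Gamma}$ and using $\|\cdot\|_{1/2,\Omega}\le C\|\cdot\|_{0,\Omega}^{1/2}\|\cdot\|_{1,\Omega}^{1/2}$ gives (\ref{Lipschitz_h}). The main obstacle I expect is the duality step for the $L^2$-bound of $w_h$, since $w_h$ is not a Galerkin solution in the standard sense (it carries nontrivial boundary data), so one has to carefully track the boundary contribution $\int_\Gamma v_h\partial_n\phi$ and balance it against the low-regularity factor $h^{-1/2}\|v_h\|_{0,\Gamma}$ coming from the inverse trace estimate.
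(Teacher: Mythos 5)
Your proposal is correct, and for the only estimate the paper actually proves in-house it takes a genuinely different route. The paper simply cites \cite{Bramble.JH;Pasciak.JE;Schatz.AH1986,Casas.E;Raymond.JP2006,French.DA;King.JT1991} for (\ref{harmonic_error}) and (\ref{discrete_harmonic}), whereas you supply the standard arguments (C\'ea with a discrete trace lifting plus the inverse trace inequality for the first; a duality argument with the boundary term $\int_\Gamma v_h\partial_n\phi$ for the second) — these are fine, and your careful tracking of the boundary contribution in the $L^2$-bound of $w_h$ is exactly the delicate point. For (\ref{Lipschitz_h}) the paper argues directly in the $H^{-1/2}$--$H^{1/2}$ duality: for $g\in H^{-1/2}(\Omega)$ it solves the dual problem (\ref{auxi}) with only $H^{3/2}$ regularity, uses Galerkin orthogonality to write $\langle g,w-w_h\rangle=a(w-w_h,\psi_g-I_h\psi_g)$, and harvests the factor $h^{1/2}$ from the interpolation error $\|\psi_g-I_h\psi_g\|_{1,\Omega}\leq Ch^{1/2}\|\psi_g\|_{3/2,\Omega}$, which exactly cancels the $h^{-1/2}$ in (\ref{harmonic_error}). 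You instead run an Aubin--Nitsche argument in $L^2$ (requiring full $H^2$ dual regularity, available here since $\Omega$ is convex) to get $\|w-w_h\|_{0,\Omega}\leq Ch^{1/2}\|v_h\|_{0,\Gamma}$ and then interpolate between $L^2$ and $H^1$. Both are valid; the paper's version is a one-step argument needing less elliptic regularity, while yours buys the $L^2$-error bound as a useful byproduct. One minor remark: your triangle-inequality step gives $\|w_h\|_{1,\Omega}\leq Ch^{-1/2}\|v_h\|_{0,\Gamma}$, i.e.\ the estimate (\ref{discrete_harmonic}) with $h^{+1/2}$ in front of the $H^1$-norm rather than $h^{-1/2}$; as stated in the paper that inequality is clearly a typo (it fails already for $v_h\equiv 1$), and your version is the one actually used later in the stability proof of Lemma \ref{stab}.
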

\begin{proof}
The proof of (\ref{harmonic_error}) and (\ref{discrete_harmonic}) can be found in \cite{Bramble.JH;Pasciak.JE;Schatz.AH1986}, \cite{Casas.E;Raymond.JP2006} and \cite{French.DA;King.JT1991}. Here we provide a proof of (\ref{Lipschitz_h}). For each $g\in H^{-{1\over 2}}(\Omega)$ let $\psi_g\in H^{3\over 2}(\Omega)\cap H^1_0(\Omega)$ be the solution of
\begin{equation}\label{auxi}
a(\phi,\psi_g)=\langle g,\phi\rangle_{H^{-{1\over 2}}, H^{1\over 2}},\ \ \ \forall\ \phi\in H^1_0(\Omega).
\end{equation}
Then we have $\|\psi_g\|_{{3\over 2},\Omega}\leq C\|g\|_{-{1\over 2},\Omega}$. Note that from (\ref{cont_harmonic}) and (\ref{disc_harmonic}) we have
\begin{eqnarray}
\nonumber
\langle g,w-w_h\rangle_{H^{-{1\over 2}}, H^{1\over 2}}= a(w-w_h,\psi_g)= a(w-w_h,\psi_g-I_h\psi_g),
\end{eqnarray}
where $I_h\psi_g$ is the linear Lagrange interpolation of $\psi_g$ (\cite{Ciarlet.PG1978}). Then standard error estimates lead to
\begin{eqnarray}
\nonumber
\langle g,w-w_h\rangle_{H^{-{1\over 2}}, H^{1\over 2}}
&=& a(w-w_h,\psi_g-I_h\psi_g)\nonumber\\
&\leq&\|w-w_h\|_{1,\Omega}\|\psi_g-I_h\psi_g\|_{1,\Omega}\nonumber\\
&\leq& Ch^\frac{1}{2}\|v_h\|_{{1\over 2},\Gamma}\|\psi_g\|_{\frac{3}{2},\Omega} \\
\nonumber
&\leq& C\|v_h\|_{0,\Gamma}\|g\|_{-\frac{1}{2},\Omega},
\end{eqnarray}
where we have used the estimate (\ref{harmonic_error}). This implies
\begin{eqnarray}
\|w_h-w\|_{\frac{1}{2},\Omega}\leq C\|v_h\|_{0,\Gamma},\nonumber
\end{eqnarray}
which proves (\ref{Lipschitz_h}).
\end{proof}

The semi-discrete finite element approximation of
(\ref{OPT_state}) reads: Find $y_h\in L^2(V^h)$ such that
\begin{equation}\label{semidiscrete}
\left\{\begin{aligned}
&-(y_h,\partial_tv_h)_{\Omega_T} +a(y_h,v_h)_{\Omega_T}=(f,v_h)_{\Omega_T} + (y_0^h, v_h(\cdot,0))\ \ \forall\ v_h\in H^1(V^h_0),\\
&\ y_h= Q_h(u)\ \ \mbox{on}\ \Sigma
\end{aligned}\right.
\end{equation}
with $v_h(\cdot, T)=0$, $y_0^h\in V^h$ an approximation of $y_0$ using the $L^2$-projection, and $Q_h$ the projection operator from $L^2(\Gamma)$ to $V^h(\Gamma)$. Note that the above semi-discrete scheme is well-defined and admits a unique solution since $Q_h(u)\in L^2(H^{1\over 2}(\Gamma))$.

The semi-discrete finite element approximation of
(\ref{OPT})-(\ref{OPT_state}) reads as follows:
\begin{eqnarray}
\min\limits_{u_h\in U_{ad}^h,y_h\in L^2(V^h)} J_h(y_h,u_h)&={1\over 2}\|y_h-y_d\|^2_{L^2(L^2(\Omega))}+{\alpha\over 2}\|u_h\|^2_{L^2(L^2(\Gamma))}\label{OPT_semi}
\end{eqnarray}
subject to
\begin{equation}\label{OPT_state_semi}
\left\{ \begin{aligned}
&-(y_h,\partial_tv_h)_{\Omega_T} +a(y_h,v_h)_{\Omega_T}=(f,v_h)_{\Omega_T} + (y_0^h, v_h(\cdot,0))\ \ \forall\ v_h\in H^1(V^h_0),\\
&\ y_h= Q_h(u_h)\ \ \mbox{on}\ \Sigma,
\end{aligned}\right.
\end{equation}
where $y_h\in L^2(V^h)$, $y_0^h\in V^h$ is an approximation
of $y_0$, and $U_{ad}^{h}$ is an appropriate approximation to $U_{ad}$ depending on the discretization scheme for the control.

It follows that the control problem (\ref{OPT_semi})-(\ref{OPT_state_semi}) has a
unique solution $(y_h,u_h)$ and that a pair $(y_h,u_h)$ is the
solution of the problem (\ref{OPT_h})-(\ref{OPT_state_h}) if and only if
there is a co-state $z_h\in L^2(V^h_0)$ such that the triplet $(y_h,z_h,u_h)$
satisfies the following optimality conditions:
\begin{equation}\label{state_semi}
\left\{\begin{aligned}
&-(y_h,\partial_tv_h)_{\Omega_T} +a(y_h,v_h)_{\Omega_T}=(f,v_h)_{\Omega_T} + (y_0^h, v_h(\cdot,0))\ \ \forall\ v_h\in H^1(V^h_0),\\
&\ y_h= Q_h(u_h)\ \ \mbox{on}\ \Sigma,
\end{aligned}\right.
\end{equation}

\begin{equation}\label{adjoint_semi}
\left\{\begin{aligned}
-\big(\frac{\partial z_h}{\partial t},q_h\big)+a(q_h,z_h)=(y_h-y_{d},q_h),\ \
\forall\ q_h\in V^h_0,\\
z_h=0\ \ x\in \Sigma;\ z_h(x,T)=0\ \ \ x\in\Omega,
\end{aligned}\right.
\end{equation}

\begin{equation}
\int_0^T\int_{\Omega}(y_h-y_{d})(y_h(v_h)-y_h)dxdt+\alpha\int_0^T\int_{\Gamma}u_h(v_h-u_h)dsdt\geq 0, \ \ \forall\ v_h\in U_{ad}^h,\label{adjoint_control_semi}
\end{equation}
where $y_h(v_h)\in L^2(V^h)$ is the solution of state equation (\ref{state_semi}) with Dirichlet boundary condition $Q_h(v_h)$.

We next consider the fully discrete approximation for above
semi-discrete problem by using the dG(0) scheme in time. For simplicity we consider an equi-distant partition of the time interval. Let $0=t_0<t_1<\cdots <t_{N-1}<t_N=T$ with $k={T\over N}$ and $t_i=ik$,
 $i=1,2,\cdots,N$. We also set $I_i:=(t_{i-1},t_i]$. For
$i=1,2,\cdots,N$, we construct the finite element spaces $V^h\in
H^1(\Omega)$ with the mesh $\mathscr{T}^h$. Similarly,
we construct the finite element spaces $U^h\in L^2(\Gamma)$ with the mesh $\mathscr{T}^h_U$. In our case we have $U^h=V^h(\Gamma)$. Then we denote by $V^h$ and $U^h$ the finite element spaces defined on $\mathscr{T}^h$ and $\mathscr{T}^h_U$ on each time step.

Let $V_k$ denote the space of piecewise constant functions on the time partition. We define the $L^2$ projection operator $P_k:L^2(0,T)\rightarrow V_k$ on $I_i$ through
\begin{eqnarray}
(P_kw)(t)=\frac{1}{k}\int_{I_i}w(s)ds\ \ \mbox{for}\ t\in I_i\ (i=1,\cdots,N).\nonumber
\end{eqnarray}
Then we have the following estimate
\begin{eqnarray}
\|(I-P_k)w\|_{L^2(0,T;H)}\leq Ck\|w_t\|_{L^2(0,T;H)},\ \ \forall\ w\in H^1(0,T;H),\label{Pk_error}
\end{eqnarray}
where $H$ denotes some separable Hilbert space .

We consider a dG(0) scheme for the time discretization and set
\begin{eqnarray}
V_{hk}:=\Big\{\phi:\overline\Omega\times [0,T]\rightarrow \mathbb{R},\ \phi(\cdot,t)|_{\overline\Omega}\in V^h,\ \phi(x,\cdot)|_{I_n}\in \mathbb{P}_0\ \mbox{for }\ n=1,\cdots,N\Big\},\nonumber
\end{eqnarray}
i.e. $\phi\in V_{hk}$ is a piecewise constant polynomial w.r.t. time. We also set $V_{hk}(\Gamma)$ as the restriction of $V_{hk}$ on $L^2(L^2(\Gamma))$. We set $Q=Q_hP_k=P_kQ_h$. Thus, we have $Q:L^2(L^2(\Gamma))\rightarrow V_{hk}(\Gamma)$. For $Y,\Phi\in V_{hk}$ we set
\begin{eqnarray}
A(Y,\Phi):=\sum\limits_{n=1}^N ka(Y^n,\Phi^n)+\sum\limits_{n=2}^N(Y^n-Y^{n-1},\Phi^n)+(Y_+^0,\Phi_+^0),\nonumber
\end{eqnarray}
where $\Phi^n:=\Phi^n_-$, $\Phi^n_{\pm}=\lim_{s\rightarrow 0^\pm}\Phi(t_n+s)$.

For each $u\in L^2(L^2(\Gamma))$ the fully discrete dG(0)-cG(1) finite element approximation of (\ref{semidiscrete}) now reads: Find $Y_{hk}\in V_{hk}$ such that
\begin{equation}\label{full_discrete}
\left\{\begin{aligned}
A(Y_{hk},\Phi)=( f,\Phi)_{\Omega_T}+(y_0,\Phi_+^0),\ \ &\forall\ \Phi\in V_{hk}^0,\\
 Y_{hk}=Q(u)\ \ &\mbox{on}\ \Gamma,
 \end{aligned}\right.
 \end{equation}
 where $V_{hk}^0$ denotes the subspace of $V_{hk}$ with functions vanishing on the boundary $\Gamma$.

It is easy to see that on each time interval $I_i$, $Y_{hk}^i\in V^h$ solves the following problem:
\begin{equation}\label{full_discrete1}
\left\{\begin{aligned}
&\big(\frac{Y_{hk}^i-Y_{hk}^{i-1}}{k},w_h\big)+a(Y_{hk}^i,w_h)=(P_k^if,w_h),\
\forall\ w_h\in V^h_0,\;\; i=1,\cdots,N,\\
& Y_{hk}^0(x)=y_0^h(x),\ x\in\Omega; \ \ Y_{hk}^i= Q_h(P_k^iu),\ i=1,\cdots,N\ \ \mbox{on}\ \Gamma.
\end{aligned}\right.
\end{equation}
Here we use $L^2$-projection to approximate the non-smooth Dirichlet boundary condition in (\ref{full_discrete}).

In the following we need to investigate the stability behavior of the fully discrete scheme (\ref{full_discrete}) with respect to  the initial value $y_0$, the righthand side $f$ and the Dirichlet boundary conditions $u$.
\begin{Lemma}\label{stab}
There exists a constant $C$ independent of $h,k$ and the data $(f,y_0)$ such that for  $k=O(h^2)$, there holds
\begin{eqnarray}
&&\sum\limits_{i=1}^{N}\Big(\| Y_{hk}^i- Y_{hk}^{i-1}\|_{0,\Omega}^2+k\|Y_{hk}^i\|_{1,\Omega}^2\Big)+\| Y_{hk}^N\|_{0,\Omega}^2\nonumber\\
&\leq& C\big(\|y_0\|^2_{0,\Omega}+h^{-1}\|u\|_{L^2(L^2(\Gamma))}^2+\|f\|_{L^2(L^2(\Omega))}^2\big).\label{stability}
\end{eqnarray}
\end{Lemma}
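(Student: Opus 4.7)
The plan is to obtain (\ref{stability}) by a discrete energy estimate, modified by a lifting of the inhomogeneous Dirichlet data into the interior that is chosen so as to produce the claimed $h^{-1}$ (rather than $k^{-1}$) scaling. Define $\ell^i\in V^h$ as the unique finite element function equal to $Q_h(P_k^iu)$ at every boundary node of $\mathscr{T}^h$ and zero at every interior node; then $Y_{hk}^i-\ell^i\in V^h_0$ is an admissible test function in (\ref{full_discrete1}). Since $\ell^i$ is supported in the single layer of triangles adjacent to $\Gamma$, standard scaling plus the inverse estimate on boundary edges gives
\[
\|\ell^i\|_{0,\Omega}\leq Ch^{1/2}\|P_k^iu\|_{0,\Gamma},\qquad \|\ell^i\|_{1,\Omega}\leq Ch^{-1/2}\|P_k^iu\|_{0,\Gamma},
\]
and a Poincar\'e inequality on the $O(h)$-wide strip, applied to $\psi\in H_0^1(\Omega)$ (which vanish on $\Gamma$), yields by duality
\[
\|\ell^i\|_{H^{-1}(\Omega)}\leq Ch\|\ell^i\|_{0,\Omega}\leq Ch^{3/2}\|P_k^iu\|_{0,\Gamma}.
\]
The more natural discrete harmonic extension only satisfies $\|\cdot\|_{0,\Omega}\leq C\|\cdot\|_{0,\Gamma}$ (cf.~(\ref{discrete_harmonic})) and will, as discussed below, produce a $k^{-1}$ factor that is too large under $k=O(h^2)$.

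Testing (\ref{full_discrete1}) with $w_h=Y_{hk}^i-\ell^i$, applying the polarization identity $2(a-b,a)=\|a\|^2-\|b\|^2+\|a-b\|^2$ with $a=Y_{hk}^i$, $b=Y_{hk}^{i-1}$, and summing for $i=1,\ldots,N$, the left-hand side of (\ref{stability}) is controlled by $\tfrac12\|y_0^h\|_{0,\Omega}^2+R_1+R_2+R_3$, where
\[
R_1=\sum_i(Y_{hk}^i-Y_{hk}^{i-1},\ell^i),\quad R_2=\sum_i k\,a(Y_{hk}^i,\ell^i),\quad R_3=\sum_i k(P_k^if,Y_{hk}^i-\ell^i).
\]
The terms $R_2$ and $R_3$ are routine: Cauchy--Schwarz, Young's inequality and the $H^1$/$L^2$ strip bounds give $|R_2|\leq\tfrac14\sum_i k\|Y_{hk}^i\|_{1,\Omega}^2+Ch^{-1}\|u\|_{L^2(L^2(\Gamma))}^2$ and $|R_3|\leq \epsilon\sum_i k\|Y_{hk}^i\|_{1,\Omega}^2+C_\epsilon\|f\|_{L^2(L^2(\Omega))}^2+Ch\|u\|_{L^2(L^2(\Gamma))}^2$.

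The decisive step is $R_1$. Discrete summation by parts gives
\[
R_1=(Y_{hk}^N,\ell^N)-(Y_{hk}^0,\ell^1)-\sum_{i=1}^{N-1}(Y_{hk}^i,\ell^{i+1}-\ell^i).
\]
For the endpoints, $|(Y_{hk}^N,\ell^N)|\leq\tfrac14\|Y_{hk}^N\|_{0,\Omega}^2+Ch\|P_k^Nu\|_{0,\Gamma}^2$, and since $\|P_k^Nu\|_{0,\Gamma}^2\leq k^{-1}\|u\|_{L^2(I_N;L^2(\Gamma))}^2$, the assumption $k=O(h^2)$ converts the prefactor $hk^{-1}$ into $O(h^{-1})$, producing the claimed $Ch^{-1}\|u\|_{L^2(L^2(\Gamma))}^2$; the other endpoint is handled analogously and contributes $\tfrac12\|y_0\|_{0,\Omega}^2+Ch^{-1}\|u\|_{L^2(L^2(\Gamma))}^2$. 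For the telescoping sum, write $Y_{hk}^i=(Y_{hk}^i-\ell^i)+\ell^i$ with $Y_{hk}^i-\ell^i\in V^h_0$, apply $H^1_0$--$H^{-1}$ duality to the first piece and direct $L^2$-Cauchy--Schwarz to the second, using the $H^{-1}$ strip bound plus $\sum_i\|P_k^{i+1}u-P_k^iu\|_{0,\Gamma}^2\leq 4k^{-1}\|u\|_{L^2(L^2(\Gamma))}^2$ together with a weighted discrete Cauchy--Schwarz to obtain
\[
\sum_{i=1}^{N-1}|(Y_{hk}^i,\ell^{i+1}-\ell^i)|\leq Ch^{3/2}k^{-1}\|u\|_{L^2(L^2(\Gamma))}\Bigl(\sum_i k\|Y_{hk}^i\|_{1,\Omega}^2\Bigr)^{1/2}+Ch^{-1}\|u\|_{L^2(L^2(\Gamma))}^2.
\]
Since $h^{3/2}k^{-1}=O(h^{-1/2})$ under $k=O(h^2)$, Young's inequality bounds the right-hand side by $\epsilon\sum_i k\|Y_{hk}^i\|_{1,\Omega}^2+C_\epsilon h^{-1}\|u\|_{L^2(L^2(\Gamma))}^2$. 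Choosing $\epsilon$ small and absorbing the $\tfrac14\|Y_{hk}^N\|_{0,\Omega}^2$ and $H^1$-terms into the left-hand side proves (\ref{stability}). The main obstacle is precisely this $R_1$-estimate: a naive lifting by the discrete harmonic extension produces a $k^{-1}\sim h^{-2}$ factor that is too large, and one needs the improved $L^2$- and $H^{-1}$-scalings of the boundary-strip lifting, combined with the CFL-type condition $k=O(h^2)$, to recover the $h^{-1}$ scaling asserted in (\ref{stability}).
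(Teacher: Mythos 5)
Your argument is correct, but it follows a genuinely different route from the paper. The paper does not test with $Y_{hk}^i-\ell^i$ for a boundary-strip lifting; instead it splits $y_u^i=y_2^i+\tilde Q_hy_u^i$ using the $L^2$-projection $\tilde Q_h$ onto $V_0^h$, so that the awkward term corresponding to your $R_1$ \emph{vanishes identically}: since $(y_2^i,w_h)=0$ for all $w_h\in V_0^h$, the discrete time difference never sees the boundary correction, and no summation by parts in time is needed. The price the paper pays is the estimate $\|y_2^i\|_{s,\Omega}\leq Ch^{1/2-s}\|Q_h(P_k^iu)\|_{0,\Gamma}$, which it obtains by identifying $y_2^i=(I-\tilde Q_h)y_1^i$ with $y_1^i$ the discrete harmonic extension, invoking Lemma 3.1 and a density/interpolation argument. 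Your proof replaces that machinery by the elementary nodal boundary-layer lifting with its $h^{1/2}$, $h^{-1/2}$ and $h^{3/2}$ scalings in $L^2$, $H^1$ and $H^{-1}$, at the cost of the Abel summation and the careful bookkeeping of powers of $k$ versus $h$; your observation that the discrete harmonic extension would be too crude here is exactly the point the paper's orthogonality trick is designed to circumvent. Two minor remarks, neither of which distinguishes you unfavourably from the paper: (i) both arguments really use the mesh coupling in the direction $k\gtrsim h^2$ (you when converting $hk^{-1}$ and $h^{3/2}k^{-1}$ into $h^{-1}$ and $h^{-1/2}$, the paper when absorbing $\sum_ih\|y_2^i\|_{0,\Omega}^2$ into $h^{-1}\|Q(u)\|^2_{L^2(L^2(\Gamma))}$), even though ``$k=O(h^2)$'' literally asserts the opposite inequality; (ii) the energy identity only yields the $H^1$-seminorm $\sum_ik\,a(Y_{hk}^i,Y_{hk}^i)$, so a short additional step (Poincar\'e for $Y_{hk}^i-\ell^i\in V_0^h$ plus the $L^2$ strip bound) is needed to recover the full $\sum_ik\|Y_{hk}^i\|_{1,\Omega}^2$ claimed in (\ref{stability}); the paper is equally silent on this.
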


\begin{proof}
Since the problem under consideration is linear it is sufficient to consider the problems with either $f\equiv 0$, $y_0\equiv 0$ or $u\equiv 0$.

Let us first assume that $f\equiv 0$, $y_0\equiv 0$. The proof follows the idea of \cite{French.DA;King.JT1993}. To begin with we introduce the following problem: Find $y_u\in V_{hk}$ with
\begin{equation}\label{y_u}
\left\{\begin{aligned}
&\big(\frac{y_u^i-y_u^{i-1}}{k},w_h\big)+a(y_u^i,w_h)=0,\
\forall\ w_h\in V^h_0,\;\; i=1,\cdots,N,\\
&y_u^0(x)=0\ x\in\Omega; \ \ y_u^i= Q_h(P_k^iu),\ i=1,\cdots,N\ \ \mbox{on}\ \Gamma.
\end{aligned}\right.
\end{equation}
For arbitrary $y_h\in V^h$ we have the splitting
\begin{eqnarray}
y_h=y_1+R_hy_h\ \ \ \ \mbox{and}\ \ y_h=y_2+\tilde Q_hy_h,\nonumber
\end{eqnarray}
where $\tilde Q_hy_h\in V_0^h$ and $R_hy_h\in V_0^h$ are the $L^2$-projection and Ritz-projection of $y_h$, respectively. Then we have $y_2|_{\Gamma}=y_h$, $y_1|_{\Gamma}=y_h$ and
\begin{eqnarray}
(y_2,v_h)=0\ \ \ \ \mbox{and}\ \ a(y_1,v_h)=0,\ \ \forall\ v_h\in V_0^h.\nonumber
\end{eqnarray}
Let $y^i_u=y_2^i+\tilde Q_hy_u^i$. Then (\ref{y_u}) delivers
\begin{eqnarray}
(\tilde Q_hy^i_u-\tilde Q_hy^{i-1}_u,w_h)+ka(\tilde Q_hy^i_u,w_h)=-ka(y_2^i,w_h),\ \ \forall\ w_h\in V^h_0.
\end{eqnarray}
Let $w_h=\tilde Q_hy^i_u$. Summation  from 1 to $N$ then gives
\begin{eqnarray}
\sum\limits_{i=1}^{N}\Big(\frac{1}{2}\|\tilde Q_hy^i_u-\tilde Q_hy^{i-1}_u\|_{0,\Omega}^2+ka(\tilde Q_hy^i_u,\tilde Q_hy^i_u)\Big)+\frac{1}{2}\|\tilde Q_hy^N_u\|_{0,\Omega}^2=-\sum\limits_{i=1}^{N}ka(y_2^i, \tilde Q_hy^i_u),\nonumber
\end{eqnarray}
which implies
\begin{eqnarray}
\frac{1}{2}\sum\limits_{i=1}^{N}\Big(\|\tilde Q_hy^i_u-\tilde Q_hy^{i-1}_u\|_{0,\Omega}^2+ka(\tilde Q_hy^i_u,\tilde Q_hy^i_u)\Big)+\frac{1}{2}\|\tilde Q_hy^N_u\|_{0,\Omega}^2\leq \frac{1}{2}\sum\limits_{i=1}^{N}ka(y_2^i, y_2^i).\label{Ry}
\end{eqnarray}
On the other hand, we have
\begin{eqnarray}
\sum\limits_{i=1}^{N}\big(\|y^i_2-y^{i-1}_2\|_{0,\Omega}^2+ka(y^i_2,y^i_2)\big)+\|y^N_2\|_{0,\Omega}^2\leq \sum\limits_{i=1}^{N}\big(ka(y_2^i, y_2^i)+2\|y^i_2\|_{0,\Omega}^2\big).\label{y_0}
\end{eqnarray}
Combining (\ref{Ry}) and (\ref{y_0}) yields
\begin{eqnarray}
\sum\limits_{i=1}^{N}\big(\|y^i_u-y_u^{i-1}\|_{0,\Omega}^2+ka( y_u^i,y_u^i)\big)+\| y_u^N\|_{0,\Omega}^2\leq C\sum\limits_{i=1}^{N}\big(ka(y_2^i, y_2^i)+\|y^i_2\|_{0,\Omega}^2\big).
\end{eqnarray}
For $y^i_u\in V^h$ we also have the splitting $y^i_u=y_1^i+R_hy_u^i$. Since $C_0^\infty(\Omega)$ is dense in $H^s(\Omega)$ for $s\leq {1\over 2}$ (see \cite{French.DA;King.JT1993}), there exists $\{v^n\}\subset C_0^\infty(\Omega)$ such that $\lim\limits_{n\rightarrow \infty}\|v^n-y_1^i\|_{{1\over 2},\Omega}=0$. From the definition of the $L^2$-projection we conclude that
 \begin{eqnarray}
 y_2^i&=&(I-\tilde Q_h)y_u^i=(I-\tilde Q_h)(y_1^i+R_hy_u^i)\nonumber\\
 &=&(I-\tilde Q_h)y_1^i,\nonumber
 \end{eqnarray}
 thus, we deduce that
\begin{eqnarray}
\|y_2^i\|_{0,\Omega}=\|(I-\tilde Q_h)y_1^i\|_{0,\Omega}=\lim_{n\rightarrow \infty}\|(I-\tilde Q_h)v^n\|_{0,\Omega}\leq Ch^{1\over 2}\lim_{n\rightarrow \infty}\|v^n\|_{{1\over 2},\Omega}= Ch^{1\over 2}\|y_1^i\|_{{1\over 2},\Omega}.\nonumber
\end{eqnarray}
We note that $y_1^i|_{\Gamma}=y^i_u=Q_h(P_k^iu)$ and
\begin{eqnarray}
 a(y_1^i,\phi_h)=0,\ \ \forall\ \phi_h\in V_0^h.\nonumber
\end{eqnarray}
Let $w^i\in H^1(\Omega)$ be the solution of (\ref{cont_harmonic}) with $v_h$ substituted by $Q_h(P_k^iu)$. Then $\|w^i\|_{{1\over 2},\Omega}\leq C\|Q_h(P_k^iu)\|_{0,\Gamma}$ and $y_1^i$ is the finite element approximation to $w^i$. So we deduce from Lemma 3.1 that
\begin{eqnarray}
\|y_1^i\|_{{1\over 2},\Omega}&\leq& \|y_1^i-w^i\|_{{1\over 2},\Omega}+\|w^i\|_{{1\over 2},\Omega}\nonumber\\
&\leq&C\|Q_h(P_k^iu)\|_{0,\Gamma},\nonumber
\end{eqnarray}
which in turn gives
\begin{eqnarray}
\|y_2^i\|_{0,\Omega}\leq Ch^{1\over 2}\|Q_h(P_k^iu)\|_{0,\Gamma}.\nonumber
\end{eqnarray}
Inverse estimates also yield
\begin{eqnarray}
\|y_2^i\|_{1,\Omega}\leq Ch^{-{1\over 2}}\|Q_h(P_k^iu)\|_{0,\Gamma}.\nonumber
\end{eqnarray}
With the help of above estimates and norm interpolation we are led to
\begin{eqnarray}
\|y_2^i\|_{s,\Omega}\leq Ch^{{1\over 2}-s}\|Q_h(P_k^iu)\|_{0,\Gamma},\ \ 0\leq s\leq 1,\ \ i=1,2,\cdots,N.
\end{eqnarray}
Thus, for $k=O(h^2)$ we have
\begin{eqnarray}
&&\sum\limits_{i=1}^{N}\big(\|y^i_u-y_u^{i-1}\|_{0,\Omega}^2+ka( y_u^i,y_u^i)\big)+\| y_u^N\|_{0,\Omega}^2\nonumber\\
&\leq& C\sum\limits_{i=1}^{N}\big (ka(y_2^i, y_2^i)+\|y^i_2\|_{0,\Omega}^2\big )\nonumber\\
&\leq&C\sum\limits_{i=1}^{N}\big(kh^{-1}\|Q_h(P_k^iu)\|_{0,\Gamma}^2+h\|Q_h(P_k^iu)\|_{0,\Gamma}^2\big)\nonumber\\
&\leq&Ch^{-1}\sum\limits_{i=1}^{N}\int_{I_i}\|Q_h(P_k^iu)\|_{0,\Gamma}^2\nonumber\\
&\leq&Ch^{-1}\| Q(u)\|_{L^2(L^2(\Gamma))}^2\leq Ch^{-1}\|u\|_{L^2(L^2(\Gamma))}^2.
\end{eqnarray}
This gives
\begin{eqnarray}
&&\sum\limits_{i=1}^{N}\big(\|y^i_u-y_u^{i-1}\|_{0,\Omega}^2+k\| y_u^i\|_{1,\Omega}^2\big)+\| y_u^N\|_{0,\Omega}^2\leq Ch^{-1}\|u\|_{L^2(L^2(\Gamma))}^2.\label{stability_u}
\end{eqnarray}

 For the case $u\equiv 0$, let $y_f\in L^2(H_0^1(\Omega))\cap H^{1}(H^{-1}(\Omega))$ be the solution of following problem
 \begin{eqnarray}\label{y^f_semidiscrete}
\left\{\begin{aligned}
 &\big(\frac{\partial y_f}{\partial
 t},w\big)+a(y_f,w)=(f,w),\ \ \forall\ w\in
 H_0^1(\Omega),\;\;t\in(0,T],\\
 &y_f(x,0)=y_0 \ \ x\in\Omega;\ \ y_f= 0\ \ \mbox{on}\ \Gamma.
 \end{aligned}\right.
\end{eqnarray}
 Then we have
 \begin{eqnarray}
 \|y_f\|_{L^2(H^1(\Omega))}+\|\frac{\partial y_f}{\partial t}\|_{L^2(H^{-1}(\Omega))}\leq C\big(\|f\|_{L^2(L^2(\Omega))}+\|y_0\|_{0,\Omega}\big).\label{y^f_regularity}
 \end{eqnarray}
Let $y_f^i\in V^h$, $i=1,2,\cdots, N$ be the solutions of following problems:
\begin{eqnarray}\label{y_f}
\left\{\begin{aligned}
&\big(\frac{y_f^i-y_f^{i-1}}{k},w_h\big)+a(y_f^i,w_h)=(P_k^if,w_h),\
\forall\ w_h\in V^h_0,\;\; i=1,\cdots,N,\\
&y_f^0(x)=y_0^h(x),\ x\in\Omega; \ \ y_f^i= 0,\ i=1,\cdots,N,\ \ \mbox{on}\ \Gamma.
\end{aligned}\right.
\end{eqnarray}
Then we have $Y_{hk}^i= y_f^i+y_u^i$, since $ y_f^i$ is the standard fully discrete approximation of $y_f$. It is straightforward to prove that
\begin{eqnarray}
\sum\limits_{i=1}^{N}\Big(\|y_f^i-y_f^{i-1}\|_{0,\Omega}^2+k\|y_f^i\|_{1,\Omega}^2\Big)+\|y_f^N\|_{0,\Omega}^2\leq C\big(\|y_0\|_{0,\Omega}^2 + \|f\|_{L^2(L^2(\Omega))}^2\big).\label{y_f_estimate}
\end{eqnarray}
Combining (\ref{stability_u}) and (\ref{y_f_estimate}) completes the proof.

\end{proof}

We next consider the fully discrete approximation for above
semi-discrete optimal control problems by using the dG(0) scheme in time. The fully discrete approximation scheme
of (\ref{OPT_semi})-(\ref{OPT_state_semi}) is to find $(Y_{hk},U_{hk})\in
V_{hk}\times U_{ad}^{hk}$, such that
\begin{equation}
\min\limits_{U_{hk}\in
U^{hk}_{ad},Y_{hk}\in V_{hk}}J_{hk}(Y_{hk},U_{hk})=\sum\limits^{N}_{i=1}k\bigg\{{ 1\over 2}\int_{\Omega}(Y_{hk}^i-P_k^i y_d)^2dx+{\alpha\over 2}\int_{\Gamma}(U_{hk}^i)^2ds\bigg\}\label{OPT_h}
\end{equation}
subject to
\begin{equation}\label{OPT_state_h}
\left\{\begin{aligned}
A(Y_{hk},\Phi)=(f,\Phi)_{\Omega_T}+(y_0,\Phi_+^0),\ \ &\forall\ \Phi\in V_{hk}^0,\\
Y_{hk}=Q(U_{hk})\ \ &\mbox{on}\ \Sigma.
\end{aligned}\right.
\end{equation}
Here $U_{ad}^{hk}$ is an appropriate approximation to $U_{ad}$. We set $U_{ad}^{hk}= V_{hk}(\Gamma)\cap U_{ad}$ for the full discretization of the control problem (\ref{OPT})-(\ref{OPT_state}) and $U_{ad}^{hk}\equiv U_{ad}$ for its variational discretization.

It follows from standard arguments (see \cite{Lions.JL1971}) that the above control problem has a unique solution
$(Y_{hk},U_{hk})$, and that a pair $(Y_{hk},U_{hk})\in
V_{hk}\times U_{ad}^{hk}$ is the solution of
(\ref{OPT_h})-(\ref{OPT_state_h}) if and only if there is a co-state
$Z_{hk}\in V_{hk}^0$, such that the triplet
$(Y_{hk},Z_{hk},U_{hk})\in V_{hk}\times V_{hk}^0\times U_{ad}^{hk}$ satisfies the following optimality conditions:
\begin{equation}\label{state_full}
\left\{\begin{aligned}
A(Y_{hk},\Phi)=(f,\Phi)_{\Omega_T}+(y_0,\Phi_+^0),\ \ &\forall\ \Phi\in V_{hk}^0,\\
Y_{hk}=Q(U_{hk})\ \ &\mbox{on}\ \Sigma,
\end{aligned}\right.
\end{equation}
\begin{equation}\label{adjoint_full}
\left\{\begin{aligned}
A(\Phi,Z_{hk})=\sum\limits^{N}_{i=1}\int_{I_i}(Y_{hk}^i-y_d,\Phi)dt,\ \ &\forall\ \Phi\in V_{hk}^0,\\
Z_{hk}=0\ \ &\mbox{on}\ \Sigma,
\end{aligned}\right.
\end{equation}
\begin{equation}
\int_{\Omega_T}(Y_{hk}-y_{d})(Y_{hk}(v_{hk})-Y_{hk})dxdt+\alpha\int_{\Sigma}U_{hk}(v_{hk}-U_{hk})dsdt\geq 0,\ \forall\ v_{hk}\in U_{ad}^{hk},\label{adjoint_control_full}
\end{equation}
where $Y_{hk}(v_{hk})$ is the solution of problem (\ref{state_full}) with Dirichlet boundary conditions $Q(v_{hk})$.

To derive an expression for the derivative of $J_{hk}:L^2(L^2(\Gamma))\rightarrow \mathbb{R}$ analogous to the one of $J$ given by formula (\ref{optimality}) we have to define a discrete normal derivative $\partial^{hk}_nZ_{hk}\in V_{hk}(\Gamma)$ satisfying \begin{eqnarray}
\int_{\Sigma}\partial^{hk}_nZ_{hk}\Phi dsdt&=&A(\Phi,Z_{hk})-\int_{\Omega_T}(Y_{hk}-y_{d})\Phi dxdt,\ \ \forall\ \Phi\in V_{hk}.\label{reconstruction}
\end{eqnarray}
It is easy to verify that the linear form
\begin{eqnarray}
L(\Phi):=A(\Phi,Z_{hk})-\int_{\Omega_T}(Y_{hk}-y_{d})\Phi dxdt\nonumber
\end{eqnarray}
is well defined on $V_{hk}(\Gamma)$ and is also continuous. Thus from Riesz representation theorem the equation (\ref{reconstruction}) admits a unique solution $\partial^{hk}_nZ_{hk}$ in $V_{hk}(\Gamma)$. For an analogous reconstruction of discrete normal derivatives for elliptic Dirichlet boundary control problems we refer to \cite{Casas.E;Raymond.JP2006}. With the help of (\ref{reconstruction}) it is not difficult to show that
\begin{eqnarray}
0&\leq& J'_{hk}(U_{hk})(v_{hk}-U_{hk})\nonumber\\
&=&\alpha\int_{\Sigma}U_{hk}(v_{hk}-U_{hk})dsdt+\int_{\Omega_T}(Y_{hk}-y_{d})(Y_{hk}(v_{hk})-Y_{hk})dxdt\nonumber\\
&=&\alpha\int_{\Sigma}U_{hk}(v_{hk}-U_{hk})dsdt+A(Y_{hk}(v_{hk})-Y_{hk},Z_{hk})
-\int_{\Sigma}\partial^{hk}_nZ_{hk}(Y_{hk}(v_{hk})-Y_{hk})dsdt\nonumber\\
&=&\alpha\int_{\Sigma}U_{hk}(v_{hk}-U_{hk})dsdt-\int_{\Sigma}\partial^{hk}_nZ_{hk}\cdot Q(v_{hk}-U_{hk})dsdt\nonumber\\
&=&\int_{\Sigma}(\alpha U_{hk}-\partial^{hk}_nZ_{hk})(v_{hk}-U_{hk})dsdt\nonumber
\end{eqnarray}
for $v_{hk}\in U_{ad}^{hk}$, which in turn implies
\begin{eqnarray}
U_{hk}=P_{U_{ad}^{hk}}({1\over\alpha}\partial^{hk}_nZ_{hk}),\label{projection_hk}
\end{eqnarray}
where $P_{U_{ad}^{hk}}:U\rightarrow U_{ad}^{hk}$ denotes the orthogonal projection in $U$ onto $U_{ad}^{hk}$.


\section{Error estimates for the optimal control problems} \setcounter{equation}{0}

As a preliminary result we first estimate the error introduced by the discretization of the state equation, i.e., the error between the solutions of problems (\ref{weak}) and (\ref{full_discrete}).

\begin{Theorem}\label{Thm:state_0}
Suppose that $f\in L^2(L^2(\Omega))$, $u\in L^2(L^2(\Gamma))$, and $y_0\in L^2(\Omega)$. Let $y\in L^2(L^2(\Omega))$ and $Y_{hk}(u)\in V_{hk}$ with $Y_{hk}(u)|_{\Sigma}=Q(u)$ be the solutions of problems (\ref{weak}) and (\ref{full_discrete}), respectively. Then for $k=O(h^2)$ we have
\begin{eqnarray}
\|y-Y_{hk}(u)\|_{L^2(L^2(\Omega))}\leq C(h^{\frac{1}{2}}+k^{\frac{1}{4}})\big(\|f\|_{L^2(L^2(\Omega))}+\|y_0\|_{0,\Omega}+\|u\|_{L^2(L^2(\Gamma))}\big).\label{parabolic_estimate_0}
\end{eqnarray}
\end{Theorem}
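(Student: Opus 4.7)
By linearity of (\ref{weak}) and (\ref{full_discrete}), I split $y = y^{(1)} + y^{(2)}$ and $Y_{hk}(u) = Y^{(1)}_{hk} + Y^{(2)}_{hk}$, where the superscript $(1)$ corresponds to the forward problem with data $(f, y_0)$ and zero boundary, and $(2)$ to the very weak solution with $f \equiv 0$, $y_0 \equiv 0$ and boundary $u$. The difference $y^{(1)} - Y^{(1)}_{hk}$ is a standard parabolic FE error and admits an $O(h^2 + k)$ bound in $L^2(L^2(\Omega))$, so the whole work is in controlling $y^{(2)} - Y^{(2)}_{hk}$. For this piece, since $y^{(2)} \in L^2(L^2(\Omega))$ only, the natural tool is duality: for any $g \in L^2(L^2(\Omega))$ I introduce $w \in L^2(H^2 \cap H_0^1) \cap H^1(L^2)$ solving the backward problem (\ref{auxiliary}) together with its fully discrete counterpart $W_{hk} \in V_{hk}^0$ defined by $A(\Phi, W_{hk}) = (g,\Phi)_{\Omega_T}$ for all $\Phi \in V_{hk}^0$, and the associated discrete normal derivative $\partial_n^{hk} W_{hk} \in V_{hk}(\Gamma)$ defined in the spirit of (\ref{reconstruction}).

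\textbf{Error representation.} Using (\ref{weak}) with test function $w$ on the continuous side, and using (\ref{full_discrete}) with $\Phi = W_{hk}$ (which is admissible since $W_{hk} \in V_{hk}^0$) together with the reconstruction identity $\int_\Sigma \partial_n^{hk} W_{hk}\,\Phi\, ds dt = A(\Phi, W_{hk}) - (g, \Phi)_{\Omega_T}$ for $\Phi = Y_{hk}(u)$, I obtain
\begin{eqnarray*}
(g, y - Y_{hk}(u))_{\Omega_T} &=& (f, w - W_{hk})_{\Omega_T} + (y_0, w(0) - W_{hk,+}^0) \\
&& {}- \int_\Sigma u\,\bigl(\partial_n w - \partial_n^{hk} W_{hk}\bigr) ds\, dt,
\end{eqnarray*}
where I used that $\partial_n^{hk} W_{hk} \in V_{hk}(\Gamma)$ and that $Q$ is the $L^2$-projection onto $V_{hk}(\Gamma)$ to absorb the action of $Q$ on $u$. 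Taking supremum over $\|g\|_{L^2(L^2(\Omega))} = 1$ reduces the problem to three ingredients: the interior error $\|w - W_{hk}\|_{L^2(L^2(\Omega))}$, the endpoint error $\|w(0) - W_{hk,+}^0\|_{L^2(\Omega)}$, and the boundary normal-derivative error $\|\partial_n w - \partial_n^{hk} W_{hk}\|_{L^2(L^2(\Gamma))}$.

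\textbf{Term-by-term estimates and main obstacle.} Using (\ref{adjoint_stability})--(\ref{initial_stability}), standard dG(0)--cG(1) error analysis for backward parabolic problems gives $\|w - W_{hk}\|_{L^2(L^2(\Omega))} + \|w(0) - W_{hk,+}^0\|_{L^2(\Omega)} \le C(h^2 + k)\|g\|_{L^2(L^2(\Omega))}$, which is better than needed. The critical quantity is thus the boundary error, which I split as
\[
\partial_n w - \partial_n^{hk} W_{hk} = (I-Q)\partial_n w + \bigl(Q\partial_n w - \partial_n^{hk} W_{hk}\bigr).
\]
For the projection error, the decomposition $I - Q = (I-Q_h) + Q_h(I-P_k)$, together with the spatial bound (\ref{boundary_normal}) and the temporal bound (\ref{Pk_error}) applied to $\partial_n w \in L^2(H^{1/2}(\Gamma)) \cap H^{1/4}(L^2(\Gamma))$ (with the $H^{1/4}$ regularity obtained from the parabolic trace theorem of Lions--Magenes, as already invoked in Theorem \ref{reg_polygonal}), yields an $O(h^{1/2} + k^{1/4})\|g\|$ bound. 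The main obstacle is the second piece $Q\partial_n w - \partial_n^{hk} W_{hk}$, which lives entirely in $V_{hk}(\Gamma)$. I plan to bound it by duality on $L^2(L^2(\Gamma))$: test against $\psi \in V_{hk}(\Gamma)$, extend $\psi$ to a time-piecewise-constant, spatially discrete harmonic $\Psi_h \in V_{hk}$ (so that Lemma 3.1, in particular (\ref{discrete_harmonic}), controls $\Psi_h$ by $\psi$), and exploit
\[
\int_\Sigma (Q\partial_n w - \partial_n^{hk} W_{hk})\,\psi\, ds\, dt = \int_\Sigma \partial_n w\,\Psi_h\, ds\, dt - A(\Psi_h, W_{hk}) + (g, \Psi_h)_{\Omega_T},
\]
which, after integration by parts in both time and space against $w$, reduces to an expression that can be estimated by the FE error $w - W_{hk}$ tested against jumps of $\Psi_h$ and by the projection errors $(I - P_k)w$, $(I - R_h)w$. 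Combining the resulting bound with the stability estimates of the discrete harmonic extension and the standard $L^2(H^1)$ and $\ell^\infty(L^2)$ parabolic error estimates for $w - W_{hk}$ delivers the desired $O(h^{1/2} + k^{1/4})\|g\|$ contribution. Inserting all pieces and using $k = O(h^2)$ completes the proof.
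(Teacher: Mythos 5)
Your error representation is correct and your overall architecture (a fully discrete dual problem $W_{hk}$ plus a discrete normal derivative in the spirit of (\ref{reconstruction}), à la Casas--Raymond) is a legitimately different route from the paper, which never introduces a discrete dual: the paper takes $g=y-Y_{hk}(u)$ in the \emph{continuous} backward problem (\ref{auxiliary}), splits the resulting expression into a boundary term $E_1=\int_\Sigma (Q(u)-u)\partial_n w$ (handled via the orthogonality of $P_k$ and $Q_h$ together with (\ref{boundary_normal}) and (\ref{Pk_error})) and a consistency term $E_2=\int_{\Omega_T}(w_tY_{hk}-\nabla w\cdot\nabla Y_{hk})$, and estimates $E_2$ by summation by parts as a product $F_1\cdot F_2$ where $F_1\le Ch^{-1/2}\|u\|_{L^2(L^2(\Gamma))}$ comes from the stability Lemma \ref{stab} and $F_2\le C(h+k^{1/2})\|g\|$ from Ritz-projection and $P_k$ errors; the condition $k=O(h^2)$ then converts $h^{-1/2}k^{1/2}$ into $k^{1/4}$.

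The gap in your version is the estimate of $Q\partial_n w-\partial_n^{hk}W_{hk}$, which carries the entire difficulty and is not actually established by your sketch. Working out your identity with the slab-wise discrete harmonic extension $\Psi_h$ of $\psi$, one arrives (after summation by parts in time, using $w(T)=0$) at
\begin{equation*}
\sum_{n=2}^N\big(w^{n-1}-W^n_{hk},\Psi_h^n-\Psi_h^{n-1}\big)+\big(w^0-W^1_{hk},\Psi_h^1\big)+\sum_{n=1}^N k\,a\big(P_k^nw-W^n_{hk},\Psi_h^n\big).
\end{equation*}
The elliptic part is fine: $(\sum_n k\|\Psi_h^n\|_{1,\Omega}^2)^{1/2}\le Ch^{-1/2}\|\psi\|_{L^2(L^2(\Gamma))}$ by (\ref{discrete_harmonic}) and the $L^2(H^1)$ error of $W_{hk}$ is $O(h+k^{1/2})$, giving $O(h^{1/2}+k^{1/4})$. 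But the temporal jump term fails: since $\Psi_h$ is built slab by slab it has no temporal smoothing, so the best one gets from (\ref{discrete_harmonic}) is $(\sum_n\|\Psi_h^n-\Psi_h^{n-1}\|_{0,\Omega}^2)^{1/2}\le Ck^{-1/2}\|\psi\|_{L^2(L^2(\Gamma))}$, while $w^{n-1}-W_{hk}^n=(w^{n-1}-P_k^nw)+(P_k^nw-W^n_{hk})$ contributes at least $(\sum_n\|w^{n-1}-P_k^nw\|_{0,\Omega}^2)^{1/2}\le Ck^{1/2}\|w_t\|_{L^2(L^2(\Omega))}$; the product is $O(1)\|g\|\,\|\psi\|$, not $O(h^{1/2}+k^{1/4})$. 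This is precisely the place where the paper's argument succeeds because the roles are reversed: there the jump factor is $\sum_i\|Y^i_{hk}-Y^{i-1}_{hk}\|_{0,\Omega}^2\le Ch^{-1}\|u\|^2$, i.e.\ the jumps of a \emph{discrete parabolic solution}, which Lemma \ref{stab} controls far better than the jumps of an arbitrary extension of an arbitrary $\psi$. To repair your argument you would have to replace $\Psi_h$ by the discrete parabolic solution $Y_{hk}(\psi)$ with boundary datum $\psi$ (so that Lemma \ref{stab} applies), at which point the computation collapses into the paper's estimate of $E_2$ with $u$ replaced by $\psi$ — so the detour through $\partial_n^{hk}W_{hk}$ buys nothing and, as written, leaves the key bound unproven. (A minor additional point: your claimed $O(h^2+k)$ bound for $\|w(0)-W^0_{hk,+}\|_{0,\Omega}$ is optimistic under the available regularity $w\in L^2(H^2(\Omega))\cap H^1(L^2(\Omega))$, but only an $O(h^{1/2}+k^{1/4})$ bound is needed there, so this is harmless.)
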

\begin{proof}
We first note that according to \cite{French.DA;King.JT1993} $y\in L^2(H^{1\over 2}(\Omega))$ holds. In view of the linearity of the problem it is sufficient to consider the problems with either $f\equiv 0$, $y_0\equiv 0$ or $u\equiv 0$.

Let us first assume that $f\equiv 0$, $y_0\equiv 0$. Let $g=y- Y_{hk}(u)$ in (\ref{auxiliary}). Since $w(T)=0$, we from (\ref{weak}) and (\ref{full_discrete}) deduce that
\begin{eqnarray}
&&\|y-Y_{hk}(u)\|_{L^2(L^2(\Omega))}^2\nonumber\\
&=&\int_0^T\int_{\Omega}(-w_t-\Delta w)(y- Y_{hk}(u))dxdt\label{E}\\
&=&\int_0^T\int_{\Omega}(-w_ty-\Delta wy)dxdt+\int_0^T\int_{\Omega}(w_tY_{hk}(u)-\nabla w\nabla Y_{hk}(u))dxdt+\int_0^T\int_{\Gamma} Q(u)\frac{\partial w}{\partial n}dsdt\nonumber\\
&=&\big (\int_0^T\int_{\Gamma}Q(u)\frac{\partial w}{\partial n}-\int_0^T\int_{\Gamma}u\frac{\partial w}{\partial n}\big )dsdt
+\int_0^T\int_{\Omega}\big(w_tY_{hk}(u)-\nabla w\nabla Y_{hk}(u)\big)dxdt\nonumber\\
&:=&E_1+E_2.\nonumber
\end{eqnarray}
We treat $E_1$ by exploiting the properties of $P_k$ and $Q_h$:
\begin{eqnarray}
E_1&=&\int_0^T\int_{\Gamma}( Q(u)-u)\frac{\partial w}{\partial n}dsdt\nonumber\\
&=&\int_0^T\big\langle(P_k-I)u,\frac{\partial w}{\partial n}\big\rangle dt+\int_0^T\big\langle( Q_h-I)P_ku, \frac{\partial w}{\partial n}\big\rangle dt\nonumber\\
&=&\int_0^T\big\langle(P_k-I)u,\frac{\partial}{\partial n}(I-P_k)w\big\rangle dt+\int_0^T\big\langle( Q_h-I)P_ku,(I-Q_h)\frac{\partial w}{\partial n}\big\rangle dt.\nonumber
\end{eqnarray}
From Young's inequality, the trace inequality and a norm interpolation inequality we derive (see, e.g. \cite{French.DA;King.JT1993})
\begin{eqnarray}
\|\frac{\partial}{\partial n}(I-P_k)w\|_{L^2(L^2(\Gamma))}^2&\leq& \frac{C}{\epsilon}\|(I-P_k)w\|_{L^2(H^2(\Omega))}^2+{\epsilon}\|(I-P_k)w\|_{L^2(H^1(\Omega))}^2\nonumber\\
&\leq&\frac{C}{\epsilon}\|(I-P_k)w\|_{L^2(H^2(\Omega))}^2+{\epsilon}\|(I-P_k)w\|_{L^2(H^2(\Omega))}\|(I-P_k)w\|_{L^2(L^2(\Omega))}\nonumber\\
&\leq&\frac{2C}{\epsilon}\|(I-P_k)w\|_{L^2(H^2(\Omega))}^2+{\epsilon^3}\|(I-P_k)w\|_{L^2(L^2(\Omega))}^2.\nonumber
\end{eqnarray}
Setting $\epsilon = k^{-\frac{1}{2}}$ and using the approximation property (\ref{Pk_error}) of $P_k$ gives
\begin{eqnarray}
\|\frac{\partial}{\partial n}(I-P_k)w\|_{L^2(L^2(\Gamma))}^2\leq Ck^{1/2}\big(\|w\|_{L^2(H^2(\Omega))}^2+\|w_t\|_{L^2(L^2(\Omega))}^2\big).\nonumber
\end{eqnarray}
We also have
\begin{eqnarray}
\|(I-Q_h)\frac{\partial w}{\partial n}\|_{L^2(L^2(\Gamma))}\leq Ch^{1/2}\|w\|_{L^2(H^2(\Omega))}.\nonumber
\end{eqnarray}
Using the Cauchy-Schwartz inequality and stability results for $Q_h$ and $P_k$ we estimate
\begin{eqnarray}
|E_1|&\leq& Ck^{1/4}\|u\|_{L^2(L^2(\Gamma))}\big(\|w\|_{L^2(H^2(\Omega))}+\|w_t\|_{L^2(L^2(\Omega))}\big)\nonumber\\
&&+Ch^{1/2}\|P_ku\|_{L^2(L^2(\Gamma))}\|w\|_{L^2(H^2(\Omega))}\nonumber\\
&\leq& C(h^{1/2}+k^{1/4})\|u\|_{L^2(L^2(\Gamma))}\|g\|_{L^2(L^2(\Omega))}.\label{E_1}
\end{eqnarray}
Next we estimate $E_2$. Considering (\ref{full_discrete}) and $w^N=w(\cdot,T)=0$ we calculate
\begin{eqnarray}
E_2&=&\int_0^T\int_{\Omega}\big(w_tY_{hk}(u)-\nabla w\nabla Y_{hk}(u)\big)dxdt\nonumber\\
&=&\sum\limits_{i=1}^N\big(Y_{hk}^i(u),w^{i}-w^{i-1}\big)-k(\nabla Y_{hk}^i(u),\nabla P_k^iw)\nonumber\\
&=&-\sum\limits_{i=1}^N\big(Y_{hk}^i(u)- Y_{hk}^{i-1}(u),w^{i-1}\big)+k(\nabla Y_{hk}^i(u),\nabla P_k^iw)\nonumber\\
&=&-\sum\limits_{i=1}^N(Y_{hk}^i(u)- Y_{hk}^{i-1}(u),w^{i-1}- R_hP_k^iw)+k\big(\nabla Y_{hk}^i(u),\nabla (P_k^iw-R_hP_k^iw)\big).\label{E2}
\end{eqnarray}
By the Cauchy-Schwartz inequality we have
\begin{eqnarray}
|E_2|\leq F_1\cdot F_2,\nonumber
\end{eqnarray}
where
\begin{eqnarray}
F_1=\Big(\sum\limits_{i=1}^N\big(\| Y_{hk}^i(u)- Y_{hk}^{i-1}(u)\|_{0,\Omega}^2+k(\nabla   Y_{hk}^i(u),\nabla  Y_{hk}^i(u))\big)\Big)^{{1\over 2}}\nonumber
\end{eqnarray}
and
\begin{eqnarray}
F_2=\Big(\sum\limits_{i=1}^N\big(\|w^{i-1}- R_hP_k^iw\|_{0,\Omega}^2+k(\nabla(I-  R_h)P_k^iw,\nabla(I- R_h)P_k^iw)\big)\Big)^{{1\over 2}}.\nonumber
\end{eqnarray}
In view of the stability result (\ref{stability}) of Lemma \ref{stab} we have
\begin{eqnarray}
|F_1|\leq Ch^{-{1\over 2}}\|u\|_{L^2(L^2(\Gamma))}.\label{F_1}
\end{eqnarray}
It remains to estimate $F_2$. To begin with we note that
\begin{eqnarray}
\|w^{i-1}-R_hP_k^iw\|_{0,\Omega}&\leq & \|w^{i-1}- P_k^iw\|_{0,\Omega}+\|(I-  R_h)P_k^iw\|_{0,\Omega}\nonumber\\
&\leq&\|w^{i-1}-P_k^iw\|_{0,\Omega}+Ch^2\|P_k^iw\|_{2,\Omega},\label{F2_1}
\end{eqnarray}
and
\begin{eqnarray}
(\nabla(I- R_h)P_k^iw,\nabla(I-R_h)P_k^iw)\leq Ch^2\|P_k^iw\|_{2,\Omega}^2.\label{F2_2}
\end{eqnarray}
It is straightforward to show that
\begin{eqnarray}
\|w^{i-1}-P_k^iw\|_{0,\Omega}\leq k^{1/2}\|w_t\|_{L^2(I_i,L^2(\Omega))}\label{F2_3}
\end{eqnarray}
and
\begin{eqnarray}
\|P_k^iw\|_{2,\Omega}\leq k^{-1/2}\|w\|_{L^2(I_i,H^2(\Omega))}.\label{F2_4}
\end{eqnarray}
Combining (\ref{F2_1})-(\ref{F2_4}) we get
\begin{eqnarray}
F_2&\leq& C\Big(\sum\limits_{i=1}^N((h^4+kh^2)\|P_k^iw\|_{2,\Omega}^2+k\|w_t\|_{L^2(I_i,L^2(\Omega))}^2)\Big)^{{1\over 2}}\nonumber\\
&\leq&C(h+k^{{1\over 2}})\Big(\sum\limits_{i=1}^N(\|w\|_{L^2(I_i,H^2(\Omega))}^2+\|w_t\|_{L^2(I_i,L^2(\Omega))}^2)\Big)^{{1\over 2}}\nonumber\\
&\leq&C(h+k^{{1\over 2}})\big(\|w\|_{L^2(H^2(\Omega))}+\|w_t\|_{L^2(L^2(\Omega))}\big).
\end{eqnarray}
Using the stability estimate of Lemma \ref{stab} we conclude
\begin{eqnarray}
|E_2|\leq Ch^{-{1\over 2}}(h+k^{{1\over 2}})\big(\|w\|_{L^2(H^2(\Omega))}+\|w_t\|_{L^2(L^2(\Omega))}\big)\|u\|_{L^2(L^2(\Gamma))}.\label{E_3}
\end{eqnarray}
From the estimates (\ref{E})-(\ref{E_3}) we conclude the desired result in the case $f\equiv 0$, $y_0\equiv 0$.
\begin{eqnarray}
\|y-Y_{hk}(u)\|_{L^2(L^2(\Omega))}\leq C(h^{{1\over 2}}+k^{{1\over 4}})\|u\|_{L^2(L^2(\Gamma))}.
\end{eqnarray}

If $u\equiv0$, $f\in L^2(L^2(\Omega))$ and $y_0\in L^2(\Omega)$ we have $y\in L^2(H_0^1(\Omega))\cap H^1(H^{-1}(\Omega))$ (see, e.g., \cite{Lions.JL;Magenes.E1972}). Then it is straightforward to prove that
\begin{eqnarray}
\|y- Y_{hk}(u)\|_{L^2(L^2(\Omega))}\leq C(h+k^{1\over 2})(\|f\|_{L^2(L^2(\Omega))}+\|y_0\|_{0,\Omega}).
\end{eqnarray}
Combining both cases completes the proof.
\end{proof}

Now we are in a position to derive our main result of this section: the a priori error estimates for optimal control problems. At first we consider the fully discrete case, i.e., $U_{ad}^{hk}=U_{ad}\cap V_{hk}(\Gamma)$.
\begin{Theorem}\label{Thm:fully_discrete}
Let $(y,u,z)\in {L^2(L^2(\Omega))}\times {L^2(L^2(\Gamma))}\times {L^2(H^2(\Omega))}\cap H^1(L^2(\Omega))$ and $(Y_{hk},U_{hk},Z_{hk})\in V_{hk}\times U_{ad}^{hk}\times V_{hk}^0$ be the solutions of problem (\ref{OPT_weak})-(\ref{optimality1}) and (\ref{state_full})-(\ref{adjoint_control_full}), respectively. Then  we have the a priori error estimate
\begin{eqnarray}
\|u-U_{hk}\|_{L^2(L^2(\Gamma))}+\|y-Y_{hk}\|_{L^2(L^2(\Omega))}\leq C(h^{1\over 2}+k^{{1\over 4}}),\label{main_result}
\end{eqnarray}
with a constant $C>0$ independent of $h$ and $k$. 
\end{Theorem}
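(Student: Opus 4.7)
The plan is to combine the continuous and discrete first-order optimality systems by the standard ``add variational inequalities'' trick, reduce the control error to state/adjoint discretization errors, and apply Theorem \ref{Thm:state_0} together with its adjoint analogue.

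Since $U_{ad}^{hk}\subset U_{ad}$, I would first test (\ref{optimality}) with $v=U_{hk}$. Next, I construct an auxiliary $\tilde u\in U_{ad}^{hk}$ approximating $u$: because the box bounds $u_a,u_b$ are constants, a space--time quasi-interpolant of $u$ clipped into $[u_a,u_b]$ (for instance $P_{U_{ad}^{hk}}(Qu)$ with $Q=Q_hP_k$, or a Scott--Zhang-type variant) inherits the approximation properties of $Q_h$ and $P_k$ on $\Gamma$. I test the discrete inequality (\ref{adjoint_control_full}), equivalently (\ref{projection_hk}), with $v_{hk}=\tilde u$, add the two, and rearrange to arrive at
\begin{align*}
\alpha\|u-U_{hk}\|^2_{L^2(L^2(\Gamma))}
&\leq \int_\Sigma(\alpha U_{hk}-\partial_n z)(\tilde u-u)\,ds\,dt\\
&\quad+\int_\Sigma(\partial_n z-\partial^{hk}_nZ_{hk})(\tilde u-U_{hk})\,ds\,dt.
\end{align*}
The first integral is controlled by $\|\tilde u-u\|_{L^2(L^2(\Gamma))}$ since $\alpha U_{hk}-\partial_n z\in L^2(L^2(\Gamma))$ uniformly (via $U_{hk}\in U_{ad}$ and (\ref{projection})); exploiting the regularity $u\in L^2(H^{1/2}(\Gamma))\cap H^{1/4}(L^2(\Gamma))$ from Theorem \ref{reg_polygonal} together with the approximation estimates (\ref{boundary_trace}) and (\ref{Pk_error}), this quantity is of order $h^{1/2}+k^{1/4}$.

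The second integral is the principal difficulty: it requires controlling the mismatch $\partial_n z-\partial^{hk}_nZ_{hk}$ in duality with $\tilde u-U_{hk}\in V_{hk}(\Gamma)$. Because $\partial^{hk}_nZ_{hk}$ is defined only implicitly through the reconstruction identity (\ref{reconstruction}), I plan to insert an auxiliary continuous adjoint $\tilde z$ solving (\ref{OPT_adjoint}) with $y$ replaced by the very weak state $y(U_{hk})$ associated to the discrete control, together with its fully discrete counterpart $\tilde Z_{hk}$, and decompose
\[\partial_n z-\partial^{hk}_nZ_{hk}=(\partial_n z-\partial_n\tilde z)+(\partial_n\tilde z-\partial^{hk}_n\tilde Z_{hk})+(\partial^{hk}_n\tilde Z_{hk}-\partial^{hk}_nZ_{hk}).\]
The first piece depends linearly on $y-y(U_{hk})$ and is bounded by $C\|u-U_{hk}\|_{L^2(L^2(\Gamma))}$ via Lemma \ref{Lemma:state} and the trace estimate for the adjoint. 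The third piece depends linearly on $y(U_{hk})-Y_{hk}$ and is bounded by $C(h^{1/2}+k^{1/4})\|u-U_{hk}\|_{L^2(L^2(\Gamma))}$ via Theorem \ref{Thm:state_0} applied to the difference of two state problems, plus data-dependent terms of the same order. The middle piece is the discretization error of a backward parabolic problem with homogeneous Dirichlet data and $L^2$ right-hand side; a duality argument mirroring the proof of Theorem \ref{Thm:state_0}, performed against the discrete test $\tilde u-U_{hk}$ through (\ref{reconstruction}), yields an $O(h^{1/2}+k^{1/4})$ bound.

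Assembling these pieces, applying Young's inequality to absorb the $\|u-U_{hk}\|^2$-contributions into the left-hand side, and invoking the triangle inequality together with Theorem \ref{Thm:state_0} to convert the resulting control bound into the state bound gives (\ref{main_result}). The main obstacle is the middle piece of the decomposition above: since $\partial^{hk}_n\tilde Z_{hk}$ is not a classical trace but a global reconstruction, the estimate must be carried out entirely in a dual form that routes all boundary pairings back to interior state-type errors controlled by Theorem \ref{Thm:state_0}, and one must ensure that the low boundary regularity (only $H^{1/4}$ in time) does not cost any order when passing through the reconstruction (\ref{reconstruction}).
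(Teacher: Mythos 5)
Your starting inequality is algebraically correct, and the overall strategy (adding the two variational inequalities in their gradient form and splitting $\partial_n z-\partial^{hk}_nZ_{hk}$) is a legitimate alternative to the paper's argument. However, there are two genuine gaps. The first is quantitative: you bound $\int_\Sigma(\alpha U_{hk}-\partial_n z)(\tilde u-u)\,ds\,dt$ by Cauchy--Schwarz using only a uniform $L^2$ bound on $\alpha U_{hk}-\partial_n z$, which makes this term of order $h^{1/2}+k^{1/4}$ \emph{linearly}. Inserted into $\alpha\|u-U_{hk}\|^2_{L^2(L^2(\Gamma))}\le\cdots$ and followed by a square root, this delivers only $\|u-U_{hk}\|=O(h^{1/4}+k^{1/8})$, half the claimed rate. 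To obtain (\ref{main_result}) this term must be quadratic in the approximation error: one splits $\alpha U_{hk}-\partial_n z=\alpha(U_{hk}-u)+(\alpha u-\partial_n z)$, absorbs the first part by Young's inequality, and handles the second by the $L^2(\Sigma)$-orthogonality of $\tilde u-u$ against $V_{hk}(\Gamma)$ when $\tilde u=Q(u)$, so that $\int_\Sigma(\alpha u-\partial_n z)(Qu-u)=\int_\Sigma(I-Q)(\alpha u-\partial_n z)\,(Qu-u)$ becomes a product of two $O(h^{1/2}+k^{1/4})$ factors (using $\partial_n z\in L^2(H^{1/2}(\Gamma))\cap H^{1/4}(L^2(\Gamma))$ from Theorem \ref{reg_polygonal}). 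This orthogonality is precisely what the paper exploits for its term $I_1$; with your clipped choice $\tilde u=P_{U_{ad}^{hk}}(Qu)$ the orthogonality is lost and an additional argument would be required.

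The second gap is that the middle piece of your decomposition, $\int_\Sigma(\partial_n\tilde z-\partial^{hk}_n\tilde Z_{hk})(\tilde u-U_{hk})\,ds\,dt$, which you yourself identify as the main obstacle, is asserted but not proved, and it is exactly the hard part. The paper avoids ever estimating a discrete normal derivative: it works with the primal form of the optimality conditions (\ref{inequality_1})--(\ref{inequality_2}), tests with $v=U_{hk}$ and $v_{hk}=Q(u)$, and after rearranging is left with the single nontrivial term $(y-y_d,\,y(U_{hk})-y-(Y_{hk}-Y_{hk}(Qu)))_{\Omega_T}$. This is integrated by parts against the \emph{continuous} adjoint $z\in L^2(H^2(\Omega))\cap H^1(L^2(\Omega))$, producing boundary terms that involve only $\partial_n z$ paired with $(I-Q)$-projection errors (again quadratic after one more use of $Q$-orthogonality) and an interior summation-by-parts term that is estimated exactly like $E_2$ in the proof of Theorem \ref{Thm:state_0}, via the Ritz projection and the stability Lemma \ref{stab} with $k=O(h^2)$. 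If you insist on the gradient route, your middle piece must be converted back into such interior pairings through the reconstruction identity (\ref{reconstruction}) applied to a discrete extension of $\tilde u-U_{hk}$, with care that the multiplier $\|\tilde u-U_{hk}\|_{L^2(L^2(\Gamma))}\le C(h^{1/2}+k^{1/4})+\|u-U_{hk}\|_{L^2(L^2(\Gamma))}$ is retained so that Young's inequality can close the estimate; none of this is carried out in the proposal, so as written the proof is incomplete.
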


\begin{proof}
Let us recall the continuous and discrete optimality conditions
\begin{eqnarray}
\int_0^T\int_{\Omega}(y-y_d)(y(v)-y)dxdt+\alpha\int_0^T\int_{\Gamma} u(v-u)dsdt\geq 0,\ \ \forall\ v\in U_{ad},\label{inequality_1}
\end{eqnarray}
and
\begin{equation}
\int_{0}^{T}\int_{\Omega}(Y_{hk}- y_{d})(Y_{hk}(v_{hk})- Y_{hk})dxdt+\alpha\int_{0}^{T}\int_{\Gamma} U_{hk}(v_{hk}- U_{hk})dsdt\geq 0,\ \forall\ v_{hk}\in U_{ad}^{hk}.\label{inequality_2}
\end{equation}
Setting $v=U_{hk}\in U_{ad}$ and $v_{hk}=Q(u)\in U_{ad}^{hk}$ we have
\begin{eqnarray}
&&\alpha\|u- U_{hk}\|_{L^2(L^2(\Gamma))}^2=\alpha\int_0^T\int_{\Gamma}(u- U_{hk})^2dsdt\nonumber\\
&=&\alpha\int_0^T\int_{\Gamma}u(u- U_{hk})dsdt-\alpha\int_0^T\int_{\Gamma} U_{hk}(u- U_{hk})dsdt\nonumber\\
&\leq&\int_0^T\int_{\Omega}(y-y_d)(y(U_{hk})-y)dxdt-\alpha\int_0^T\int_{\Gamma} U_{hk}(u- Q(u))dsdt\nonumber\\
&&-\alpha\int_0^T\int_{\Gamma}U_{hk}(Q(u)-U_{hk})dsdt\nonumber\\
&\leq&\int_0^T\int_{\Omega}(y-y_d)(y(U_{hk})-y)dxdt+\int_0^T\int_{\Omega}( Y_{hk}-y_d)( Y_{hk}(Qu)-Y_{hk})dxdt\nonumber\\
&&-\alpha\int_0^T\int_{\Gamma}U_{hk}(u- Q(u))dsdt,\label{4.2.1}
\end{eqnarray}
where $y(U_{hk})\in L^2(L^2(\Omega))$ with $y(U_{hk})|_{\Sigma}= U_{hk}$ solves
\begin{eqnarray}
\int_{\Omega_T}y(U_{hk})(-v_t-\Delta v)dxdt=-\int_{\Sigma}U_{hk}\partial_{{n}}vdsdt+\int_{\Omega_T}fvdxdt+\int_\Omega y_0v(\cdot,0)dx\nonumber\\
\ \ \forall\ v\in L^2(H^2(\Omega)\cap H_0^1(\Omega))\cap H^1(L^2(\Omega))\label{auxiliary1}
\end{eqnarray}
with $v(\cdot,T)=0$, and $Y_{hk}(Qu)\in V_{hk}$ solves
\begin{equation}\label{auxiliary2}
\left\{\begin{aligned}
A(Y_{hk}(Qu),\Phi)=(f,\Phi)_{\Omega_T}+(y_0,\Phi_+^0),\ \ &\forall\ \Phi\in V_{hk}^0,\\
Y_{hk}(Qu)=Q(u)\ \ &\mbox{on}\ \Sigma.
\end{aligned}\right.
\end{equation}
With Young's inequality we deduce
\begin{eqnarray}
&&\int_0^T\int_{\Omega}(y-y_d)(y(U_{hk})-y)dxdt+\int_0^T\int_{\Omega}( Y_{hk}-y_d)( Y_{hk}(Qu)- Y_{hk})dxdt\nonumber\\
&=&(y-y_d,y(U_{hk})-y)_{\Omega_T}+(Y_{hk}-y_d,Y_{hk}(Qu)- Y_{hk})_{\Omega_T}\nonumber\\
&=&(y-y_d,y(U_{hk})-y)_{\Omega_T}+( Y_{hk}-y,y-Y_{hk})_{\Omega_T}\nonumber\\
&&+(Y_{hk}-y, Y_{hk}(Qu)-y)_{\Omega_T}+(y-y_d, Y_{hk}(Qu)- Y_{hk})_{\Omega_T}\nonumber\\
&=&-\|y-Y_{hk}\|_{L^2(L^2(\Omega))}^2+(Y_{hk}-y, Y_{hk}(Qu)-y)_{\Omega_T}\nonumber\\
&&+\big(y-y_d,y( U_{hk})-y-( Y_{hk}-Y_{hk}(Qu))\big)_{\Omega_T}\nonumber\\
&\leq&-\|y-Y_{hk}\|_{L^2(L^2(\Omega))}^2+\big(y-y_d,y(U_{hk})-y-(Y_{hk}- Y_{hk}(Qu))\big)_{\Omega_T}\nonumber\\
&&+\sigma\|y-Y_{hk}\|_{L^2(L^2(\Omega))}^2+C(\sigma)\| Y_{hk}(Qu)-y\|_{L^2(L^2(\Omega))}^2.\label{4.2.2}
\end{eqnarray}
Taking $\sigma>0$ small enough, we from (\ref{4.2.1})-(\ref{4.2.2}) obtain
\begin{eqnarray}
&&\alpha\|u-U_{hk}\|_{L^2(L^2(\Gamma))}^2+\|y- Y_{hk}\|_{L^2(L^2(\Omega))}^2\nonumber\\
&\leq &-\alpha\int_0^T\int_{\Gamma} U_{hk}(u- Q(u))dsdt+C\| Y_{hk}(Qu)-y\|_{L^2(L^2(\Omega))}^2\nonumber\\
&&+(y-y_d,y(U_{hk})-y-( Y_{hk}- Y_{hk}(Qu)))_{\Omega_T}\nonumber\\
&:=&I_1+I_2+I_3.
\end{eqnarray}
Note that from the standard error estimates for the $L^2$-projection and the regularity of $u$ we have
\begin{eqnarray}
\|Q(u)-u\|_{L^2(L^2(\Gamma))}^2\leq C(h+k^{1\over 2})\big(\|u\|_{L^2(H^{\frac{1}{2}}(\Gamma))}^2+\|u\|_{H^{\frac{1}{4}}(L^2(\Gamma))}^2\big).
\end{eqnarray}
Thus we are led to
\begin{eqnarray}
|I_1|&=&\big|-\alpha\int_0^T\int_{\Gamma} U_{hk}(u-Q(u))dsdt\big|\nonumber\\
&=&\big|\alpha\int_0^T\int_{\Gamma}u( Q(u)-u)dsdt+\alpha\int_0^T\int_{\Gamma}(U_{hk}-u)( Q(u)-u)dsdt\big|\nonumber\\
&=&\big|\alpha\int_0^T\int_{\Gamma}(u- Q(u))( Q(u)-u)dsdt +\alpha\int_0^T\int_{\Gamma}(U_{hk}-u)( Q(u)-u)dsdt\big|\nonumber\\
&\leq&\sigma\|u- U_{hk}\|^2_{L^2(L^2(\Gamma))}+C\|u- Q(u)\|^2_{L^2(L^2(\Gamma))}\nonumber\\
&\leq&\sigma\|u- U_{hk}\|^2_{L^2(L^2(\Gamma))}+C(h+k^{1\over 2})\big(\|u\|_{L^2(H^{\frac{1}{2}}(\Gamma))}^2+\|u\|_{H^{\frac{1}{4}}(L^2(\Gamma))}^2\big).
\end{eqnarray}
Since $Y_{hk}(Qu)$ is the fully discrete finite element approximation of $y$, the error estimate (\ref{parabolic_estimate_0}) of Theorem \ref{Thm:state_0} gives
\begin{eqnarray}
I_2=\|Y_{hk}(Qu)-y\|_{L^2(L^2(\Omega))}^2\leq C(h+k^{1\over 2})\|u\|_{L^2(L^2(\Gamma))}^2.
\end{eqnarray}
Then it remains to estimate $I_3$. From (\ref{weak}), (\ref{OPT_adjoint}), (\ref{auxiliary1}) and (\ref{auxiliary2}) we have
\begin{eqnarray}
I_3&=&(y-y_d,y(U_{hk})-y-(Y_{hk}- Y_{hk}(Qu)))_{\Omega_T}\nonumber\\
&=&\int_0^T\int_{\Omega}(y(U_{hk})-y)(-\frac{\partial z}{\partial t}-\Delta z)dxdt-\int_0^T\int_{\Omega}(Y_{hk}- Y_{hk}(Qu))(-\frac{\partial z}{\partial t}-\Delta z)dxdt\nonumber\\
&=&\int_0^T\int_{\Omega}\big(-(y(U_{hk})-y)z_t-(y( U_{hk})-y)\Delta z\big)dxdt\nonumber\\
&&+\int_0^T\int_{\Omega}\big(z_t(Y_{hk}- Y_{hk}(Qu))-\nabla(Y_{hk}-Y_{hk}(Qu))\nabla z\big )dxdt+\int_0^T\int_{\Gamma}(U_{hk}-Q(u))\partial_{{ n}}z\nonumber\\
&=&-\int_0^T\int_{\Gamma}( U_{hk}-u)\partial_{{ n}}zdsdt+\int_0^T\int_{\Gamma}(U_{hk}-Q(u))\partial_{{n}}zdsdt\nonumber\\
&&+\int_0^T\int_{\Omega}\big(z_t(Y_{hk}- Y_{hk}(Qu))-\nabla(Y_{hk}-Y_{hk}(Qu))\nabla z\big)dxdt\nonumber\\
&=&H_1+H_2+H_3.
\end{eqnarray}
Note that
\begin{eqnarray}
H_1+H_2&=&-\int_0^T\int_{\Gamma}(U_{hk}-u)\partial_{{ n}}zdsdt+\int_0^T\int_{\Gamma}(U_{hk}-Q(u))\partial_{{ n}}zdsdt\nonumber\\
&=&-\int_0^T\int_{\Gamma}\big(U_{hk}-u- Q(U_{hk}-u)\big)\partial_{{ n}}zdsdt\nonumber\\
&=&-\int_0^T\int_{\Gamma}\big(U_{hk}-u- Q(U_{hk}-u)\big)\big(\partial_{{n}}z- Q(\partial_{{n}}z)\big)dsdt.\nonumber
\end{eqnarray}
It is straightforward to estimate
\begin{eqnarray}
|H_1+H_2|&\leq& C(h+k^{1\over 2})\big(\|z\|_{L^2(H^2(\Omega))}^2+\|z_t\|_{L^2(L^2(\Omega))}^2\big)+\delta\|u- U_{hk}\|^2_{L^2(L^2(\Gamma))}.
\end{eqnarray}
Define $E_{hk}:=Y_{hk}- Y_{hk}(Qu)$. Using the proof technique of Theorem \ref{Thm:state_0} we from (\ref{state_full}), (\ref{auxiliary2}) obtain
\begin{eqnarray}
H_3&=&\int_0^T\int_{\Omega}\big(z_tE_{hk}-\nabla E_{hk}\nabla z\big )dxdt\nonumber\\
&=&\sum\limits_{i=1}^N\big((E_{hk}^i,z^i-z^{i-1})-k(\nabla E_{hk}^i,\nabla P_k^iz)\big),\nonumber\\
&=&-\sum\limits_{i=1}^N\big((E_{hk}^i-E_{hk}^{i-1},z^{i-1})+k(\nabla E_{hk}^i,\nabla P_k^iz)\big),\nonumber\\
&=&-\sum\limits_{i=1}^N(E_{hk}^i-E_{hk}^{i-1},z^{i-1}-R_hP_k^iz)+k(\nabla E_{hk}^i,\nabla (P_k^iz-R_hP_k^iz)).\nonumber
\end{eqnarray}
With the help of projection error estimate and proceeding as in the estimate of (\ref{E2}) we have
\begin{eqnarray}
|H_3|&\leq& C(h+k^{1\over 2})\big(\|z\|_{L^2(H^2(\Omega))}+\|z\|_{H^1(L^2(\Omega))}\big)\big(\sum\limits_{i=1}^{N}\|E_{hk}^i- E_{hk}^{i-1}\|^2_{0,\Omega}+k\|E_{hk}^i\|^2_{1,\Omega}\big)^{1\over 2}.\nonumber
\end{eqnarray}
From Lemma \ref{stab} we conclude
\begin{eqnarray}
|H_3|&\leq&C(h+k^{1\over 2})\big(\|z\|_{L^2(H^2(\Omega))}+\|z_t\|_{L^2(L^2(\Omega))}\big)h^{-{1\over 2}}\|Q(u)-U_{hk}\|_{L^2(L^2(\Gamma))}\nonumber\\
&\leq&C(h^{{1\over 2}}+k^{1\over 4})\big(\|z\|_{L^2(H^2(\Omega))}+\|z_t\|_{L^2(L^2(\Omega))}\big)\|u- U_{hk}\|_{L^2(L^2(\Gamma))},
\end{eqnarray}
where we have used the fact that $k=O(h^2)$. Combining above results and using the Cauchy-Schwartz inequality and Young's inequality completes the proof.
\end{proof}

If we use variational discretization concept introduced in \cite{Hinze.M2005}, i.e., $U_{ad}^{hk}=U_{ad}$, we can prove the following error estimates in a similar way.
\begin{Theorem}\label{Thm:variational_discrete}
Let $(y,u,z)\in {L^2(L^2(\Omega))}\times {L^2(L^2(\Gamma))}\times {L^2(H^2(\Omega))}\cap H^1(L^2(\Omega))$ and $(Y_{hk},U_{hk},Z_{hk})\in V_{hk}\times U_{ad}\times V_{hk}^0$ be the solutions of problem (\ref{OPT_weak})-(\ref{optimality1}) and (\ref{state_full})-(\ref{adjoint_control_full}), respectively. Then we have following a priori error estimate
\begin{eqnarray}
\|u-U_{hk}\|_{L^2(L^2(\Gamma))}+\|y-Y_{hk}\|_{L^2(L^2(\Omega))}\leq C(h^{1\over 2}+k^{{1\over 4}})\label{main_result1}
\end{eqnarray}
with a constant $C>0$ independent of $h$ and $k$.
\end{Theorem}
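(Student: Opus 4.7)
The plan is to follow the proof of Theorem \ref{Thm:fully_discrete} almost line-by-line, exploiting the key simplification that in variational discretization $U_{ad}^{hk}=U_{ad}$, so $u$ itself is admissible as a test function in the discrete optimality condition. I would test the continuous optimality condition (\ref{inequality_1}) with $v=U_{hk}\in U_{ad}$ and the discrete optimality condition (\ref{adjoint_control_full}) with $v_{hk}=u\in U_{ad}^{hk}$. Adding the two inequalities yields
\begin{eqnarray*}
\alpha\|u-U_{hk}\|_{L^2(L^2(\Gamma))}^2 &\leq& \int_{\Omega_T}(y-y_d)(y(U_{hk})-y)\,dxdt \\
&& +\int_{\Omega_T}(Y_{hk}-y_d)(Y_{hk}(u)-Y_{hk})\,dxdt,
\end{eqnarray*}
where $y(U_{hk})$ is the very weak solution of (\ref{weak}) with boundary datum $U_{hk}$ and $Y_{hk}(u)\in V_{hk}$ is the discrete state (\ref{full_discrete}) corresponding to continuous boundary datum $u$, realised as $Q(u)$. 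A notable simplification is the absence of the $I_1$-type term $-\alpha\int_\Sigma U_{hk}(u-Q(u))\,dsdt$ from the proof of Theorem \ref{Thm:fully_discrete}, since no projection of $u$ onto a discrete control space is required here.

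Next, I would rearrange the right-hand side by inserting $\pm Y_{hk}$ and $\pm y$ exactly as in (\ref{4.2.2}), thereby producing the absorbing term $-\|y-Y_{hk}\|_{L^2(L^2(\Omega))}^2$, a Young-absorbable quadratic bounded by $\sigma\|y-Y_{hk}\|_{L^2(L^2(\Omega))}^2+C(\sigma)\|Y_{hk}(u)-y\|_{L^2(L^2(\Omega))}^2$, and a residual of the form $(y-y_d,\,y(U_{hk})-y-(Y_{hk}-Y_{hk}(u)))_{\Omega_T}$. The auxiliary error $\|Y_{hk}(u)-y\|_{L^2(L^2(\Omega))}^2$ is controlled by Theorem \ref{Thm:state_0}, contributing the expected $(h+k^{1/2})$ factor.

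To treat the residual I would invoke the continuous adjoint equation (\ref{OPT_adjoint}) to replace $y-y_d$ by $-z_t-\Delta z$ and integrate by parts, splitting the residual into a boundary and an interior contribution. Using the $L^2$-projection properties of $Q=Q_hP_k$, the boundary part collapses to $-\int_\Sigma(U_{hk}-u-Q(U_{hk}-u))(\partial_n z-Q(\partial_n z))\,dsdt$, which is bounded by $C(h+k^{1/2})(\|z\|_{L^2(H^2(\Omega))}^2+\|z_t\|_{L^2(L^2(\Omega))}^2)+\delta\|u-U_{hk}\|_{L^2(L^2(\Gamma))}^2$ via standard projection error estimates. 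The interior part, involving $E_{hk}:=Y_{hk}-Y_{hk}(u)$ tested against $z$, I would handle exactly as the $H_3$ term of Theorem \ref{Thm:fully_discrete}: reorganise using the dG(0)-cG(1) form $A(\cdot,\cdot)$, insert the Ritz projection $R_h$, and apply Lemma \ref{stab} to $E_{hk}$, whose boundary datum is $Q(U_{hk}-u)$.

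The main obstacle is precisely the same as in the fully discrete case: the factor $h^{-1/2}$ from Lemma \ref{stab} applied to the discrete harmonic extension only partially cancels the $(h+k^{1/2})$ gain from the space-time projection estimates of $z$, so the interior contribution is of size $C(h^{1/2}+k^{1/4})\|u-U_{hk}\|_{L^2(L^2(\Gamma))}$ and therefore dictates the final convergence rate; this is what forces the CFL-type assumption $k=O(h^2)$ to appear implicitly through the use of Lemma \ref{stab}. After choosing $\sigma,\delta$ small and absorbing all $\|u-U_{hk}\|_{L^2(L^2(\Gamma))}$ and $\|y-Y_{hk}\|_{L^2(L^2(\Omega))}$ terms on the right-hand side, the estimate (\ref{main_result1}) follows. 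The proof is therefore slightly shorter than that of Theorem \ref{Thm:fully_discrete}, but the dominant error balance is identical.
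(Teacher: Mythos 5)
Your proposal is correct and follows essentially the same route as the paper: the paper's own proof of Theorem \ref{Thm:variational_discrete} is exactly the reduction you describe — test (\ref{inequality_1}) with $v=U_{hk}$ and (\ref{adjoint_control_full}) with $v_{hk}=u$, add, observe that the $I_1$-type term disappears, and then estimate the remaining auxiliary and residual terms precisely as for $I_2$ and $I_3$ in Theorem \ref{Thm:fully_discrete}. Your additional observation that the boundary datum of $E_{hk}$ is now $Q(U_{hk}-u)$ rather than $U_{hk}-Q(u)$ (since $U_{hk}\notin V_{hk}(\Gamma)$ in general) is handled correctly and does not change the argument.
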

\begin{proof}
In the proof of Theorem \ref{Thm:fully_discrete} it suffices to set $v=U_{hk}$ in (\ref{inequality_1}) and $v_{hk}=u$ in (\ref{inequality_2}) and add the corresponding inequalities. This directly gives
\begin{eqnarray}
\alpha\|u-U_{hk}\|_{L^2(L^2(\Gamma))}^2&\leq& (y-y_d,y(U_{hk})-y)_{\Omega_T}+( Y_{hk}-y_d,Y_{hk}(u)-Y_{hk})_{\Omega_T}\nonumber\\
&\leq&-\|y-Y_{hk}\|_{L^2(L^2(\Omega))}^2+(y-Y_{hk},y-Y_{hk}(u))\nonumber\\
&&+(y-y_d,y(U_{hk})-y-( Y_{hk}- Y_{hk}(u)))_{\Omega_T}.\nonumber
\end{eqnarray}
Thus
\begin{eqnarray}
&&\alpha\|u-U_{hk}\|_{L^2(L^2(\Gamma))}^2+\|y-Y_{hk}\|_{L^2(L^2(\Omega))}^2\nonumber\\
&\leq& C\|y-Y_{hk}(u)\|_{L^2(L^2(\Omega))}^2+(y-y_d,y(U_{hk})-y-( Y_{hk}- Y_{hk}(u)))_{\Omega_T}.\nonumber
\end{eqnarray}
The rest of proof is along the lines of the estimation of the term $I_3$ in the proof of Theorem \ref{Thm:fully_discrete}.

\end{proof}

\begin{Remark}
The error estimates we obtained in Theorem \ref{Thm:fully_discrete} and \ref{Thm:variational_discrete} reflect the worst cases we can expect for parabolic Dirichlet boundary control problems defined on convex polygonal domains. Since the regularity of parabolic equations depends on the maximum interior angle of the domain, the state admits the improved regularity $y\in L^s(0,T;W^{1,p}(\Omega))$ for $s,p \geq 2$ depending on the data and the domain. Moreover, for problems defined on curved domains with smooth boundary, we have higher regularity for the optimal control problems as stated in Theorem \ref{reg_curve}. Improved regularity leads to better approximation properties of the state and thus to better convergence rates, as is reported in our numerical results. For the elliptic case with polygonal boundaries we refer to \cite{Casas.E;Raymond.JP2006} where an approximation order for the controls of $O(h^{1-\frac{1}{p}})$ is derived for some $2<p\leq p_*$ with $p_*$ depending on the data and the maximum interior angle of the domain. For the error estimates of elliptic Dirichlet boundary control problems defined on curved domains we refer to \cite{Casas.E;Sokolowski.J2010}, \cite{Deckelnick.K;Gunther.A;Hinze.M2009SICON} and \cite{Gong.W;Yan.NN2011SICON} for more details.
\end{Remark}


\section{Numerical experiments} \setcounter{equation}{0}
In this section we will carry out some numerical experiments to support our theoretical findings. We consider the optimal control problems (\ref{OPT})-(\ref{OPT_state}) of tracking type with control set $U_{ad}$ defined as follows
\begin{eqnarray}
U_{ad}:=\big\{u\in L^2(0,T;L^2(\Gamma)):\ 0\leq u(x,t)\leq 1,\ \ \mbox{a.a.}\ (t,x)\in [0,T]\times\Gamma \big\}.\nonumber
\end{eqnarray}
Although we do not consider problems defined on curved domains in our numerical analysis, we include some numerical examples on both polygonal and curved domains using full discretization and variational discretisation of the control. For the numerical approximations of Dirichlet boundary control problems defined on curved domains we refer to \cite{Casas.E;Sokolowski.J2010}, \cite{Deckelnick.K;Gunther.A;Hinze.M2009SICON} and \cite{Gong.W;Yan.NN2011SICON}.
\begin{Example}\label{Ex:6.1}
The first example
is a unconstrained problem defined on the unit square $\Omega=[0,1]\times
[0,1]$, $T=1$. The data is chosen as
\begin{eqnarray}
f&=&-\frac{4.0}{\alpha}\sin(\pi t) - \frac{\pi}{\alpha}(x_1(1-x_1)+x_2(1-x_2))\cos(\pi t),\nonumber \\
y_d&=&-(2+1.0/\alpha)(x_1(1-x_1)+x_2(1-x_2))\sin(\pi t) + \pi x_1x_2(1-x_1)(1-x_2)\cos(\pi t),\nonumber
\end{eqnarray}
with $\alpha =1$, so that the optimal solution is given by
\begin{eqnarray}
u=-\frac{1}{\alpha}(x_1(1-x_1)+x_2(1-x_2))\sin(\pi t),\nonumber\\
y=-\frac{1}{\alpha}(x_1(1-x_1)+x_2(1-x_2))\sin(\pi t),\nonumber\\
z=x_1x_2(1-x_1)(1-x_2)\sin(\pi t).\nonumber
\end{eqnarray}

\end{Example}

At first we consider the error with respect to spatial mesh size $h$. We fix the time step to $k= \frac{1}{4096}$ and present the errors of optimal
control $u$, state $y$ and adjoint state $z$ in Table ~\ref{table:6.1}. Then we consider the convergence order of error with respect to time step size $k$. We fixed the space mesh with $DOF= 22785$ and present the errors of optimal control $u$, state $y$ and adjoint state $z$ in Table ~\ref{table:6.2}. We observe an order of convergence $\frac{3}{2}$ for the control and  order $2$ for the state and adjoint state for spatial discretization, and order $1$ convergence for both of them for the time discretization. This is the best result we can expect for linear finite elements and dG(0) approximations.

\begin{table}
\centering
\caption{ Error of control $u$, state $y$ and adjoint state $z$ for Example \ref{Ex:6.1} with fixed time step $N=4096$.}\label{table:6.1}
\begin{tabular}{|c|c|c|c|c|c|c|}
\hline
$Dof$&$\|u-U_{hk}\|_{{L^2(L^2(\Gamma))}}$&order&$\|y-Y_{hk}\|_{{L^2(L^2(\Omega))}}$&order&$\|z-Z_{hk}\|_{{L^2(L^2(\Omega))}}$&order\\
\hline
31&0.026183462176&$\setminus$&0.007511873255&$\setminus$&0.001757302801&$\setminus$ \\
\hline
105&0.010011019039&1.5762&0.001941841224&2.2178&0.000449449353&2.2353\\
\hline
385&0.003690706908&1.5360&0.000497530721&2.0961&0.000112846733&2.1273\\
\hline
1473&0.001333520373&1.5173&0.000124966909&2.0593&0.000031576461&1.8984 \\
\hline
5761&0.000477713130&1.5054&0.000034179205&1.9012&0.000017536745&0.8625\\
\hline
 \end{tabular}
\end{table}

\begin{table}
\centering
\caption{ Error of control $u$, state $y$ and adjoint state $z$ for Example \ref{Ex:6.1} with fixed mesh $Dof=22785$.}\label{table:6.2}
\begin{tabular}{|c|c|c|c|c|c|c|}
\hline
$N$&$\|u-U_{hk}\|_{{L^2(L^2(\Gamma))}}$&order&$\|y-Y_{hk}\|_{{L^2(L^2(\Omega))}}$&order&$\|z-Z_{hk}\|_{{L^2(L^2(\Omega))}}$&order\\
\hline
2&0.030398571028&$\setminus$&0.025518889725&$\setminus$&0.030398678795&$\setminus$ \\
\hline
4&0.018595263504&0.7091&0.015608671639&0.7092&0.016205193500&0.9076\\
\hline
8&0.011269239439&0.7225&0.008709737707&0.8416&0.008295451927&0.9661\\
\hline
16&0.006647997579&0.7614&0.004633730272&0.9105&0.004196415068&0.9832 \\
\hline
32&0.003745029192&0.8279&0.002392405119&0.9537&0.002112281353&0.9904\\
\hline
64&0.002021680314&0.8894&0.001213660252&0.9791&0.001060211691&0.9944 \\
\hline
128&0.001064326895&0.9256&0.000610081508&0.9923&0.000531194879&0.9970 \\
\hline
256&0.000563475024&0.9175&0.000305197979&0.9993&0.000265839733&0.9987 \\
\hline
512&0.000320272343&0.8151&0.000152181075&1.0040&0.000132937121&0.9998 \\
\hline
 \end{tabular}
\end{table}

\begin{Example}\label{Ex:6.2b}
The second example is an unconstrained problem defined in a polygonal
domain with maxminum interior angle $\omega_{max}={5\over 6}\pi$(see
\cite{May.S;Rannacher.R;Vexler.B2008}), the optimal solution has
only reduced regularity. The data is chosen as
\begin{eqnarray}
\nonumber
y_d=
\left\{ \begin{aligned}
&t^2g(x)\ \ &&0\leq t<0.5, \\
&-t^2g(x)\ \ \ &&0.5\leq t \leq 1
\end{aligned} \right.
\end{eqnarray}
with $f=1$, $g=\frac{1}{(x_1^2+x_2^2)^{\frac{1}{3}}}$.
\end{Example}

There is no exact solution for this example. We take the solution with $k=\frac{1}{4096}$ and $Dof=158561$ in the spatial discretization as reference solution. Similarly as in the previous example, we investigate the convergence order with respect to the spatial and time discretization separately. We in Table \ref{table:6.2a} can observe nearly $O(h^{1\over 2})$-order convergence for the spatial discretization of the control. This is in agreement with our theoretical results. The convergence order for the time discretization reported in Table \ref{table:6.2b} is higher than $O(k^{1\over 4})$ which is caused by the higher regularity of the state w.r.t time.
\begin{table}
\centering
\caption{ Error of control $u$, state $y$ and adjoint state $z$ for Example \ref{Ex:6.2b} with fixed time step $N=4096$.}\label{table:6.2a}
\begin{tabular}{|c|c|c|c|c|c|c|}
\hline
$Dof$&$\|u-U_{hk}\|_{{L^2(L^2(\Gamma))}}$&order&$\|y-Y_{hk}\|_{{L^2(L^2(\Omega))}}$&order&$\|z-Z_{hk}\|_{{L^2(L^2(\Omega))}}$&order\\
\hline
53&0.021578034106&$\setminus$&0.006632500684&$\setminus$&0.002058710969&$\setminus$ \\
\hline
182&0.013947673670&0.7074&0.002850064176&1.3693&0.000543409130&2.1593\\
\hline
671&0.010619549501&0.4179&0.001537497778&0.9460&0.000141300731&2.0647\\
\hline
2573&0.008212461016&0.3825&0.000866946011&0.8525&0.000036700709&2.0060\\
\hline
10073&0.006098931924&0.4360&0.000467699526&0.9044&0.000009388603&1.9978\\
\hline
average&$\setminus$&0.4859&$\setminus$&1.0180&$\setminus$&2.0570\\
\hline
 \end{tabular}
\end{table}

\begin{table}
\centering
\caption{ Error of control $u$, state $y$ and adjoint state $z$ for Example \ref{Ex:6.2b} with fixed mesh $Dof=158561$.}\label{table:6.2b}
\begin{tabular}{|c|c|c|c|c|c|c|}
\hline
$N$&$\|u-U_{hk}\|_{{L^2(L^2(\Gamma))}}$&order&$\|y-Y_{hk}\|_{{L^2(L^2(\Omega))}}$&order&$\|z-Z_{hk}\|_{{L^2(L^2(\Omega))}}$&order\\
\hline
4&0.167327134403&$\setminus$&0.088804542387&$\setminus$&0.020552016247&$\setminus$ \\
\hline
8&0.107460725354&0.6389&0.051579167598&0.7838&0.013212388713&0.6374\\
\hline
16&0.070415333055&0.6098&0.030045232355&0.7797&0.007759076126&0.7679\\
\hline
32&0.044502497910&0.6620&0.016906812048&0.8295&0.004258097400&0.8657\\
\hline
64&0.026671216305&0.7386&0.009227011579&0.8737&0.002231368045&0.9323 \\
\hline
average&$\setminus$&0.6623&$\setminus$&0.8167&$\setminus$&0.8008\\
\hline
 \end{tabular}
\end{table}

\begin{Example}\label{Ex:6.3}
This example is a control constrained problem defined in a smooth
domain (see \cite{Deckelnick.K;Gunther.A;Hinze.M2009SICON}). The domain is the unit circle
$\Omega=B(0,1)$ with center at zero and radius 1, $T=1$. The data is
presented in polar coordinates. We set
\begin{eqnarray}
f(r,\theta;t)=-6r\max(0,\cos{\theta}sin^3(\pi t))-{\pi\over 2} sin(\pi t)r^3\max(0,\cos^3{\theta}),\nonumber\\
y_d(r,\theta;t)=(7r^2\cos^2{\theta}+6r^2-6r)\cos{\theta}sin^3(\pi t)+y(r,\theta)-{\pi\over 2} sin(\pi t)r^3(r-1)\max(0,\cos^3{\theta}),\nonumber
\end{eqnarray}
so that the optimal solution is given by
\begin{eqnarray}
u(r,\theta;t)=\max(0,\cos^3{\theta}sin^3(\pi t)),\nonumber\\
y(r,\theta;t)=r^3\max(0,\cos^3{\theta}sin^3(\pi t)),\nonumber\\
z(r,\theta;t)=r^3(r-1)\cos^3{\theta}sin^3(\pi t).\nonumber
\end{eqnarray}
We set $\alpha=1$.
\end{Example}

First we consider the error with respect to spatial mesh size $h$. We fix the time step $k= \frac{1}{4096}$ and present the error of the optimal
control $u$, the state $y$ and the adjoint state $z$ in Table ~\ref{table:6.5} with full discretisation, and in Table ~\ref{table:6.51} with variational discretisation. We as expected observe that both approaches deliver similar results.
Then we consider the convergence order of the time error. We fix the space mesh with $DOF= 16641$ and present the error of the optimal control $u$, the state $y$ and the adjoint state $z$ in Table ~\ref{table:6.6}. We observe higher order convergence w.r.t. the spatial discretization for both the control $u$ and the state $y$.

\begin{table}
\centering
\caption{  Error of control $u$, state $y$ and adjoint state $z$ for Example \ref{Ex:6.3} with fixed time step $N=4096$ and full discretisation.}\label{table:6.5}
\begin{tabular}{|c|c|c|c|c|c|c|}
\hline
$Dof$&$\|u-U_{hk}\|_{{L^2(L^2(\Gamma))}}$&order&$\|y-Y_{hk}\|_{{L^2(L^2(\Omega))}}$&order&$\|z-Z_{hk}\|_{{L^2(L^2(\Omega))}}$&order\\
\hline
25&0.080438933409&$\setminus$&0.050499450897&$\setminus$&0.027537485412&$\setminus$ \\
\hline
81&0.052462945814&0.6166&0.018390173000&1.4573&0.009386608073&1.5527\\
\hline
289&0.025693482841&1.0299&0.005955737135&1.6266&0.002546961550&1.8818\\
\hline
1089&0.010836772478&1.2455&0.001775580269&1.7460&0.000654383533&1.9606 \\
\hline
4225&0.004214559039&1.3625&0.000499943184&1.8285&0.000180980668&1.8543\\
\hline
 \end{tabular}
\end{table}

\begin{table}
\centering
\caption{  Error of control $u$, state $y$ and adjoint state $z$ for Example \ref{Ex:6.3} with fixed time step $N=4096$ and variational discretisation.}\label{table:6.51}
\begin{tabular}{|c|c|c|c|c|c|c|}
\hline
$Dof$&$\|u-U_{hk}\|_{{L^2(L^2(\Gamma))}}$&order&$\|y-Y_{hk}\|_{{L^2(L^2(\Omega))}}$&order&$\|z-Z_{hk}\|_{{L^2(L^2(\Omega))}}$&order\\
\hline
25&0.080080823101&$\setminus$&0.050452860374&$\setminus$&0.027529915188&$\setminus$ \\
\hline
81&0.052889327427&0.5985&0.018972070528&1.4111&0.009398192721&1.5506\\
\hline
289&0.025433412568&1.0563&0.005947170731&1.6736&0.002550336513&1.8817\\
\hline
1089&0.010686498089&1.2509&0.001738641208&1.7742&0.000652324545&1.9670 \\
\hline
4225&0.004164564394&1.3596&0.000485052340&1.8417&0.000167939460&1.9576\\
\hline
 \end{tabular}
\end{table}

\begin{table}
\centering
\caption{ Error of control $u$, state $y$ and adjoint state $z$ for Example \ref{Ex:6.3} with fixed mesh $Dof=16641$.}\label{table:6.6}
\begin{tabular}{|c|c|c|c|c|c|c|}
\hline
$N$&$\|u-U_{hk}\|_{{L^2(L^2(\Gamma))}}$&order&$\|y-Y_{hk}\|_{{L^2(L^2(\Omega))}}$&order&$\|z-Z_{hk}\|_{{L^2(L^2(\Omega))}}$&order\\
\hline
2&0.018744907559&$\setminus$&0.027982932366&$\setminus$&0.071397322443&$\setminus$ \\
\hline
4&0.024808047492&-0.4043&0.020226462340&0.4683&0.034454664183&1.0512\\
\hline
8&0.013491456808&0.8788&0.012915023887&0.6472&0.019032557741&0.8562\\
\hline
16&0.006930734029&0.9610&0.007288727815&0.8253&0.009953295271&0.9352 \\
\hline
32&0.003529211695&0.9737&0.003870945517&0.9130&0.005085050476&0.9689\\
\hline
64&0.001839283189&0.9402&0.001994283984&0.9568&0.002569466386&0.9848 \\
\hline
128&0.001046515255&0.8136&0.001012041776&0.9786&0.001291388317&0.9925 \\
\hline
256&0.000720808717&0.5379&0.000510064738&0.9885&0.000647313735&0.9964 \\
\hline
 \end{tabular}
\end{table}

\section*{Acknowledgments}
The first author would like to thank the support of Alexander von Humboldt Foundation during the stay in University of Hamburg, Germany where this work was initialized. This work was supported by the National Basic Research Program of China under grant 2012CB821204, the National Nature Science Foundation of China under grant 11201464 and 91330115, and the scientific Research Foundation for the Returned Overseas Chinese Scholars, State Education Ministry. The second author gratefully acknowledges the support of the DFG Priority Program 1253 entitled ``Optimization with Partial Differential Equations".

\bibliographystyle{abbrv}


\nocite{Arada.N;Raymond.JP2002}
\nocite{Belgacem.FB;Bernardi.C;Fekih.HE2011}
\nocite{Arada.N;Raymond.JP2002}

\nocite{Fursikov.AV;Gunzburger.MD;Hou.LS1998}
\nocite{Grisvard.P1992}
\nocite{Hinze.M;Kunisch.K2004}
\nocite{Kunisch.K;Vexler.B2007}
\nocite{Thomee.V2006}
\nocite{Scott.LR;Zhang.SY1990}
\nocite{Gunzburger.MD;Hou.LS1992}

 \medskip
\end{document}